\documentclass[11pt]{article}

\usepackage{subfigure}
\usepackage[english]{babel}

\usepackage[center]{caption2}
\usepackage{amsfonts,amssymb,amsmath,latexsym,amsthm}
\usepackage{multirow}
\usepackage{cite}
\usepackage{color, xcolor}

\topmargin  = -0.4 in \oddsidemargin = 0.25 in
\setlength{\textheight}{8.5in} \setlength{\textwidth}{6in}
\setlength{\unitlength}{1.0 mm}

\def\ex{\mbox{ex}}

\newtheorem{thm}{Theorem}[section]
\newtheorem{cor}[thm]{Corollary}
\newtheorem{lem}[thm]{Lemma}
\newtheorem{prop}[thm]{Proposition}

\newtheorem{definition}{Definition}

\newtheorem{question}{Question}

\begin{document}	
	\title{Generalized Tur\'an number with given size}
        \author{Yan Wang, \quad Yue Xu, \quad Jiasheng Zeng, \quad Xiao-Dong Zhang{\footnote{\small Corresponding author: Xiao-Dong Zhang (E-mail: xiaodong@sjtu.edu.cn)}}\\
		\small  School of Mathematical Sciences, MOE-LSC, SHL-MAC and CMA-Shanghai\\
		\small Shanghai Jiao Tong University\\
		\small  800 Dongchuan Road, Shanghai 200240, China
	}
   	\date{}
	\date{}
	
	\maketitle
	
	\begin{abstract}
        Generalized Tur\'an problem with given size, denoted as $\mathrm{mex}(m,K_r,F)$, determines the maximum number of $K_r$-copies in an $F$-free graph with $m$ edges. We prove that for $r\ge 3$ and $\alpha\in(\frac 2 r,1]$, any graph $G$ with $m$ edges and $\Omega(m^{\frac{\alpha r}{2}})$ $K_r$-copies has a subgraph of order $n_0=\Omega(m^\frac{\alpha}{2})$, which contains $\Omega(n_0^{\frac{i(r-2)\alpha}{(2-\alpha)r-2}})$ $K_i$-copies for each $i = 2, \ldots, r$. This implies an upper bound of $\mathrm{mex}(m, K_r, F)$ when an upper bound of $\mathrm{ex}(n,K_r,F)$ is known. Furthermore, we establish an improved upper bound of $\mathrm{mex}(m, K_r, F)$ by $\mathrm{ex}(n, F)$ and $\min_{v_0 \in V(F)} \mathrm{ex}(n, K_r, F - v_0)$.      
        As a corollary, we show $\mathrm{mex}(m, K_r, K_{s,t}) = \Theta( m^{\frac{rs - \binom{r}{2}}{2s-1}} )$ for $r \geq 3$, $s \geq 2r-2$ and $t \geq (s-1)! + 1$, and obtain non-trivial bounds for other graph classes such as complete $r$-partite graphs and $K_s \vee C_\ell$, etc. 
	\end{abstract}

 {\bf{Key words:}}{ Generalized Tur\'an problem, extremal graph, clique}

  AMS Classification: 05C35
	
\section{Introduction}
In this paper, we only consider simple and finite graphs. For a graph $G=\left( V,E\right)$ and a vertex $v\in V$, the degree of $v$ is denoted by $d_G\left( v\right)$. The maximum and minimum degrees of $G$ are  denoted by $\Delta(G)$ and $\delta(G)$ respectively. The numbers of vertices and edges of $G$ are denoted by $v\left( G\right)$ and $e\left( G\right)$ respectively, which are also called the order and the size of $G$, respectively. Let $K_r$ denote the complete graph on $r$ vertices. We denote by $\mathcal{N}(F,G)$ the number of $F$-copies in $G$, and we write $k_r(G)=\mathcal{N}(K_r,G)$. Let $f$ and $g$ be two positive-valued functions of $t$. We use $f = o(g)$ to mean $\lim_{t \to \infty} \frac{f}{g} = 0$; we use $f = O(g)$, $f=\Theta(g)$, and $f = \Omega(g)$ to mean that there exist real constants $C > 0,C_2\ge C_1>0$ such that $\limsup_{t \to \infty} \frac{f}{g} \leq C$, $C_1\le \liminf_{t \to \infty} \frac{f}{g}\le\limsup_{t \to \infty} \frac{f}{g} \le C_2$, and $\liminf_{t \to \infty} \frac{f}{g} \geq C$, respectively. 

For a graph $H$, the \textit{Tur\'an number} $\text{ex}(n, H)$ is the maximum number of edges in an $n$-vertex graph that does not contain $H$ as a subgraph. Determining Tur\'an numbers for various graphs and hypergraphs is a central problem in extremal combinatorics and has been studied extensively. Alon and Shikhelman~\cite{Alon_Many T copies in H-free graphs} initiated the systematic study of a generalized version of this problem. For two graphs $F$ and $H$, the \textit{generalized Tur\'an number} $\text{ex}(n, F, H)$ is the maximum number of $F$-copies in an $n$-vertex $H$-free graph. Note that $\text{ex}(n, H)$ is the case where $F$ is a single edge. Moreover, Alon and Shikhelman~\cite{Alon_Many T copies in H-free graphs} proved that if the chromatic number $\chi(H)$ of $H$ is greater than $r$, then $\text{ex}(n, K_r, H) = \Theta(n^r)$. The problem of estimating generalized Tur\'an numbers has recently attracted significant attention, and many classical results have been generalized to this setting. For detailed discussions and progress, see~\cite{complete_r_partite_free, Bollobas_Intro, Chase_Intro, Gerbner_Intro, Grzesik_Intro, Hatami_Intro, Luo_Intro, Ma_Intro, Zhu_Intro, Zykov_Intro}, and a comprehensive survey by Gerbner and Palmer~\cite{Survey_Gerbner}.

A natural variation of this problem is to investigate the cases when the number of edges in the graph is given instead of the number of vertices. Let $\text{mex}(m, H, F)$ denote the maximum number of $H$-copies in an $F$-free graph with $m$ edges. It was first systematically proposed by Radcliffe and Uzzell in~\cite{18_Radcliff_Intro_edge}, and this article also investigated the stability and Erdős-Stone type results for $\text{mex}$. Additionally, the function $\text{mex}(m, H, F)$ has been determined for the cases when $H = K_t$ and $F$ is a path~\cite{4_Chakraborti_Intro_edge}, a star~\cite{3_Chakraborti_Intro_edge, 14_Kirsch_Intro_edge}, or a clique~\cite{Eckhoff_Intro_edge,9_Frohmader_Intro_edge}.

In this paper, we mainly focus on the generalized Tur\'{a}n problem $\mathrm{mex}(m,H,F)$ for $H=K_r$. We start by considering the number of triangles in a graph. For a graph $G$ with $n$ vertices, when the number of triangles reaches $\Omega(n^3)$, the graph must be dense, i.e., $e(G) = \Omega(n^2)$. Naturally, we explore the scenario where the number of triangles reaches $\Omega(m^{3/2})$ for a graph with $m$ edges. 
In these graphs, there might be some edges that are in very few triangles. Our goal is to find a dense subgraph that contains a large number of triangles.

Another motivation for our study comes from a classical result of Erd\H{o}s and Simonovits~\cite{Erdos_Intro}, who proved that for any $0 < \epsilon < 1$ and sufficiently large $n$, every $n$-vertex graph with $m \geq n^{1+\epsilon}$ edges contains a subgraph $G'$ of order $n_0\geq n^{\epsilon\frac{1-\epsilon}{1+\epsilon}}$ and size $e(G')\geq \frac{2}{5}n_{0}^{1+\epsilon}$, and satisfies that $\Delta(G')/\delta(G') \leq c_\epsilon$, where $c_\epsilon = 10\cdot 2^{\frac{1}{\epsilon^2}+1}$. This result was later improved by Jiang and Seiver~\cite{Jiang_Intro}. We consider an analogue for $r$-clique counts with a given size. When a graph $G$ with $m$ edges contains at least $Cm^{\frac{\alpha r}{2}}$ $K_r$-copies, we show that $G$ must have a subgraph with $\frac{1}{2}Cm^{\frac{\alpha r}{2}}$ $K_r$-copies while maintaining high edge density. However, we cannot guarantee bounded degree ratios as in the Erd\H{o}s-Simonovits result: Consider the complete tripartite graph with vertex parts of size $n$, $n^{1/2}$, and $n^{1/3}$. It has $m=\Theta(n^{3/2})$ edges and $n^{11/6} = \Theta(m^{11/9})$ triangles, but any subgraph $G'$ with $\Theta(m^{11/9})$ triangles must have $\lim_{n\to\infty}\frac{\Delta(G')}{\delta(G')} = +\infty$.
Motivated by these observations, we obtain the following more general theorem.
\begin{thm}\label{Many_Kr_indicates_subgraph_with_many_Kr}
Let $C>0$, $r \geq 3$ be an integer and $\alpha \in\left(\frac{2}{r}, 1\right]$ be a real number. 
Let $G$ be a graph with $m$ edges.
Suppose $k_r(G) \geq C m^{\frac{\alpha r}{2}}$.
Then there exists a subgraph $G_0 \subseteq G$ of order $n_0\geq C_1m^{\frac{\alpha}{2}}$ such that for $i=2,\ldots,r$, $$k_i(G_0) \geq C_i n_0^{\frac{i(r-2) \alpha}{(2-\alpha) r-2}},$$ where constants $C_1, C_2, \ldots, C_r$ depend only on $C, r$ and $\alpha$.
\end{thm}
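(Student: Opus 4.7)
The plan is to find $G_0$ by iteratively deleting low-degree vertices with a carefully chosen threshold, and then to derive the $k_i$-bounds uniformly from the $k_r$-bound via the Kruskal--Katona theorem. Specifically, I would set $D = c_0 \, m^{(\alpha r - 2)/(2(r-2))}$ for a small constant $c_0 > 0$ depending only on $C$ and $r$, and repeatedly remove any vertex whose degree in the current subgraph is below $D$, until none remains. Let $G_0$ be the resulting subgraph; by construction $\delta(G_0) \geq D$.

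The key step, and the step I expect to be the main technical obstacle, is bounding the loss of $K_r$-copies during the deletion. The crude estimate (each deletion loses at most $\binom{D-1}{r-1}$ copies, over at most $n \leq 2m$ deletions) gives a loss of order $nD^{r-1}$, which would force $D$ to be too small to achieve the bound $n_0 \leq c_1 m^{\alpha/(2\beta)}$ that I need below. Instead I would exploit the sharper per-deletion bound $\binom{\delta}{r-1} = \tfrac{\delta}{r-1}\binom{\delta-1}{r-2} \leq \delta \, D^{r-2}/(r-1)!$, valid since the deleted vertex has degree $\delta < D$ at the moment of deletion, combined with the observation that $\sum_j \delta_j$ equals the total number of edges removed and is therefore at most $e(G) = m$ (no edge is removed twice). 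Summing the per-deletion losses then yields a total loss of at most $m D^{r-2}/(r-1)!$, which for $c_0$ small enough is $\leq k_r(G)/2$; hence $k_r(G_0) \geq k_r(G)/2 \geq (C/2)\, m^{\alpha r/2}$.

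With $G_0$ in hand, the inequality $e(G_0) \leq m$ together with $\delta(G_0) \geq D$ yields $n_0 \leq 2m/D = c_1 m^{\alpha/(2\beta)}$, where $\beta = (r-2)\alpha/((2-\alpha)r - 2)$ (so that the identity $\alpha/(2\beta) = ((2-\alpha)r - 2)/(2(r-2))$ matches the exponents exactly); meanwhile the trivial inequality $k_r(G_0) \leq \binom{n_0}{r}$ delivers the advertised lower bound $n_0 \geq C_1 m^{\alpha/2}$. Combining the upper bound on $n_0$ with $k_r(G_0) \geq (C/2)\, m^{\alpha r/2}$ gives $k_r(G_0) \geq C_r n_0^{r\beta}$ for an appropriate constant $C_r$. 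For $2 \leq i \leq r-1$, the lower-bound (Lovász) form of Kruskal--Katona gives $k_i(G_0) \geq c_{i,r}\, k_r(G_0)^{i/r}$, and substituting yields $k_i(G_0) \geq C_i n_0^{i\beta}$ with $C_i = c_{i,r}\, C_r^{i/r}$, which handles all remaining $i$ at once.
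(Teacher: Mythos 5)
Your proof is correct, but it proceeds along a genuinely different route from the paper. You clean the graph by iteratively deleting vertices of degree below the threshold $D=c_0m^{\frac{\alpha r-2}{2(r-2)}}$, and control the damage with the amortized estimate: each deletion destroys at most $\binom{\delta_j}{r-1}\le \delta_j D^{r-2}/(r-1)!$ copies of $K_r$, while $\sum_j\delta_j\le m$ because the removed edge sets are disjoint, so the total loss is at most $mD^{r-2}/(r-1)!\le \frac{C}{2}m^{\alpha r/2}$ for small $c_0$; this also forces $G_0\neq\emptyset$, whence $\delta(G_0)\ge D$ gives $n_0\le 2m/D$, and $k_r(G_0)\le\binom{n_0}{r}$ gives $n_0\ge C_1m^{\alpha/2}$. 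The paper instead performs a single global edge-deletion step: it discards all edges lying in at most $\frac{1}{2}Cm^{\frac{\alpha r-2}{2}}$ copies of $K_r$, shows by contradiction with its Lemma~\ref{Max_Ku_in_G_by_Kr} that $\Omega(m^{\alpha})$ edges survive (so $n_0\ge\sqrt{2|E_2|}$ gives the lower bound on $n_0$), and bounds $n_0$ from above by noting that every surviving vertex is incident to a heavy edge and hence has degree $\Omega(m^{\frac{\alpha r-2}{2(r-2)}})$ in $G$. The final passage from $k_r$ to $k_i$ is the same in spirit: you invoke the Lov\'asz form of Kruskal--Katona to get $k_i\ge c_{i,r}k_r^{i/r}$, while the paper proves the equivalent inequality directly by induction (its Lemma~\ref{Max_Ku_in_G_by_Kr}); either is legitimate. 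The trade-off: your argument is somewhat leaner (no proof-by-contradiction for the edge count) and yields the extra structural conclusion $\delta(G_0)\ge D$, whereas the paper's construction guarantees that every edge of $G_0$ lies in many $K_r$-copies of $G$ and directly supplies the lower bound $e(G_0)=\Omega(m^{\alpha})$ used in Corollary~\ref{Many_Kr_Indicates_Dense_m_Subgraph} (though in your setup the same edge bound can be recovered from $e(G_0)\ge n_0D/2$ or from the $i=2$ case).
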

It is noteworthy that if $\alpha=1$, Theorem~\ref{Many_Kr_indicates_subgraph_with_many_Kr} shows that if $k_r(G) = \Theta(m^{\frac{r}{2}})$, then the graph contains a dense subgraph $G_0$ of size $n_0=\Theta(m^{\frac{1}{2}})$ with $k_i(G_0) = \Theta(n_0^i)$ for all $2 \leq i \leq r$. If $\alpha<1$, Theorem~\ref{Many_Kr_indicates_subgraph_with_many_Kr} yields the following corollary.
\begin{cor}\label{ex(n,kr,F)_indicates_mex(m,Kr,F)}
    Let $r\geq 3$ be an integer and $F$ be a graph. If $\mathrm{ex}(n,K_r,F)=o(n^s)$ for some $1<s\le r$, then $\mathrm{mex}(m,K_r,F)=o(m^{\frac{\left(r-1\right)s}{r+s-2}})$.
\end{cor}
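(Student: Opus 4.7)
The plan is to argue by contradiction, using Theorem~\ref{Many_Kr_indicates_subgraph_with_many_Kr} with an $\alpha$ chosen so that its conclusion lands precisely at the threshold given by $\mathrm{ex}(n,K_r,F) = o(n^s)$. Suppose the corollary fails; then there exist a constant $C > 0$ and an infinite sequence of $F$-free graphs $G$ with $m = e(G) \to \infty$ satisfying
$$k_r(G) \;\ge\; C\, m^{\frac{(r-1)s}{r+s-2}}.$$
I would set $\alpha := \frac{2(r-1)s}{r(r+s-2)}$, so that $\frac{\alpha r}{2} = \frac{(r-1)s}{r+s-2}$ and the hypothesis matches that of Theorem~\ref{Many_Kr_indicates_subgraph_with_many_Kr}. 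Clearing denominators shows $\alpha > \frac{2}{r}$ is equivalent to $(s-1)(r-2) > 0$ and $\alpha \le 1$ is equivalent to $s \le r$; hence the assumption $1 < s \le r$ (with $r \ge 3$) places $\alpha$ exactly in the admissible range $\left(\frac{2}{r},1\right]$.

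Applying Theorem~\ref{Many_Kr_indicates_subgraph_with_many_Kr} to $G$ with this $\alpha$ yields a subgraph $G_0 \subseteq G$ of order $n_0 \ge C_1 m^{\alpha/2}$ with
$$k_r(G_0) \;\ge\; C_r\, n_0^{\,\frac{r(r-2)\alpha}{(2-\alpha)r-2}}.$$
A short algebraic simplification gives
$$(2-\alpha)r - 2 \;=\; \frac{2(r-1)(r-2)}{r+s-2}, \qquad r(r-2)\alpha \;=\; \frac{2(r-1)(r-2)\,s}{r+s-2},$$
so the exponent collapses to $s$, and we obtain $k_r(G_0) \ge C_r\, n_0^{s}$.

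Since $G_0$ inherits $F$-freeness from $G$ and $n_0 \ge C_1 m^{\alpha/2} \to \infty$ as $m \to \infty$ (using $\alpha > 0$), the standing hypothesis $\mathrm{ex}(n,K_r,F) = o(n^s)$ implies $k_r(G_0) \le \mathrm{ex}(n_0,K_r,F) = o(n_0^s)$, contradicting the lower bound just obtained. I do not anticipate any real obstacle: the proof is a clean reduction to Theorem~\ref{Many_Kr_indicates_subgraph_with_many_Kr}, and its only content lies in observing that the exponent $\frac{(r-1)s}{r+s-2}$ in the corollary is engineered so that the corresponding $\alpha$ pushes the clique-count exponent on $n_0$ in the theorem to exactly $s$.
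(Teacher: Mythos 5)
Your proof is correct and follows essentially the same route as the paper: argue by contradiction, choose $\alpha = \frac{2(r-1)s}{r(r+s-2)} \in \left(\frac{2}{r},1\right]$ so that Theorem~\ref{Many_Kr_indicates_subgraph_with_many_Kr} applies, and observe that the resulting exponent on $n_0$ simplifies to exactly $s$, contradicting $\mathrm{ex}(n,K_r,F)=o(n^s)$. Your write-up merely spells out the algebraic verification of the exponent and the range of $\alpha$ in more detail than the paper does.
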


In Corollary \ref{ex(n,kr,F)_indicates_mex(m,Kr,F)}, we investigate the relationship between $\ex(n,K_r,F)$ and $\mathrm{mex}(m,K_r,F)$. Indeed, previous studies on generalized Turán problems (for example, \cite{Alon_Many T copies in H-free graphs}) mainly use the property that the neighborhood of $v$ is $(F-v)$-free for recursive induction. It is easy to see that for an extremal graph $G$ maximizing $\mathrm{mex}(m,K_r,F)$, $\mathrm{mex}(m,K_r,F)=k_r(G)\le  \frac{1}{r}\sum_{v\in G}\ex(d(v),K_{r-1},F-v)$. Consequently, if $\ex(n, K_{r-1}, F-v) = O(n^s)$, then $\mathrm{mex}(m,K_r,F) = O(m^s)$. However, this upper bound is rough and not tight when $s > 1$. Therefore, we  improve the upper bound by the following theorem.

\begin{thm}\label{Upperbound_of_Kr_in_some_F_free}
     Let $F$ be a graph with at least $3$ vertices and $r\geq 2$ be an integer. If $\mathrm{ex}(n,F)\leq C_1 n^\alpha$ and $\min\limits_{v_0\in F} (\mathrm{ex}(n,K_r,F - v_0))\leq C_2n^\beta$ where $\alpha>1$ and $\beta >1$, then there exists a constant $C=C(C_1,C_2,\alpha,\beta,F)$ such that
     $$
     \mathrm{mex}(m,K_{r+1},F)\le Cm^{f(\alpha,\beta)},
     $$
     where $f(\alpha,\beta)=1+(\beta-1)\left(1-\frac 1 \alpha\right)$.
 \end{thm}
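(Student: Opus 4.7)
The plan is to reduce the problem to bounding a single power sum of the degrees, and then to control that sum by combining a pruning argument with Hölder's inequality.

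Start by fixing $v_0 \in V(F)$ realizing the minimum $\ex(n, K_r, F-v_0) \leq C_2 n^\beta$. For every vertex $v \in V(G)$, a copy of $F-v_0$ inside $G[N(v)]$ would combine with $v$ to produce a copy of $F$ in $G$; since $G$ is $F$-free, $G[N(v)]$ must be $(F-v_0)$-free, hence $k_r(G[N(v)]) \leq C_2 d(v)^\beta$. Counting each $K_{r+1}$ through its $r+1$ vertices yields the basic reduction
$$
k_{r+1}(G) \;=\; \frac{1}{r+1}\sum_{v \in V(G)} k_r(G[N(v)]) \;\leq\; \frac{C_2}{r+1}\sum_{v} d(v)^\beta,
$$
so the task becomes: bound $\sum_v d(v)^\beta$ for an $F$-free graph with $m$ edges.

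For this I would run an Erd\H{o}s--Simonovits-style pruning. Set the threshold $\delta := c\,m^{1-1/\alpha}$ for a small constant $c$ depending only on $C_1,C_2,\alpha,\beta,F$, and iteratively delete any vertex of current degree less than $\delta$; let $v_1,\ldots,v_t$ be the deleted vertices in order, with $d_i := d_{G_i}(v_i) < \delta$. The $K_{r+1}$-copies destroyed when $v_i$ is removed number at most $k_r(G_i[N_{G_i}(v_i)]) \leq C_2 d_i^\beta \leq C_2\,\delta^{\beta-1}d_i$, and because $\sum_i d_i$ is exactly the number of edges ever removed (hence at most $m$), the total contribution of pruned vertices is
$$
\sum_{i=1}^t C_2 d_i^\beta \;\leq\; C_2\,\delta^{\beta-1}\,m \;=\; C_2\,c^{\beta-1}\,m^{\,1+(\beta-1)(1-1/\alpha)} \;=\; C_2\,c^{\beta-1}\,m^{f}.
$$
The residual subgraph $G'$ has minimum degree at least $\delta$, so $|V(G')| \leq 2e(G')/\delta \leq (2/c)m^{1/\alpha}$, and it remains to bound $k_{r+1}(G')$ by $O(m^f)$.

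For the residual I would re-apply the basic reduction and invoke H\"older's inequality
$$
\sum_{v\in V(G')} d(v)^\beta \;\leq\; \Bigl(\sum_v d(v)\Bigr)^{(\alpha-\beta)/(\alpha-1)}\Bigl(\sum_v d(v)^\alpha\Bigr)^{(\beta-1)/(\alpha-1)}
$$
(valid when $1<\beta\leq\alpha$); the desired exponent emerges precisely when one can establish $\sum_{v\in V(G')} d(v)^\alpha = O\!\bigl(m^{(\alpha^2-\alpha+1)/\alpha}\bigr)$, since direct substitution then returns $f = 1+(\beta-1)(1-1/\alpha)$. This $\sum d^\alpha$ estimate follows from the top-$k$ degree bound $\sum_{i\leq k}d_i \leq C_1 k^\alpha + m$ (a direct consequence of $e(G'[V_k])\leq C_1 k^\alpha$, splitting $\sum_{i\leq k}d_i$ into internal and crossing edges) provided the degree sequence of $G'$ is sufficiently balanced. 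The main obstacle is the regime $1<\alpha<2$, where the trivial max-degree bound $\Delta(G')\leq|V(G')|-1=O(m^{1/\alpha})$ is too weak to force $\sum d(v)^\alpha$ into the required range; to push through, one either iterates the pruning with a successively refined threshold, or applies Theorem~\ref{Many_Kr_indicates_subgraph_with_many_Kr} to pass to a nearly-regular sub-subgraph of $G'$ in which the maximum degree sits within a constant factor of the average. The case $\beta>\alpha$, should it arise, is handled by interpolating between the above estimate and the trivial bound $\sum d^\beta \leq (2m)\,\Delta^{\beta-1}$. Adding the pruned and residual contributions yields $k_{r+1}(G) \leq C\,m^{f(\alpha,\beta)}$, completing the argument.
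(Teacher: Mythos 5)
Your opening reduction ($G[N(v)]$ is $(F-v_0)$-free, hence $k_{r+1}(G)\le\frac{C_2}{r+1}\sum_v d(v)^\beta$) and your pruning step are both correct, and the pruning cleanly handles the low-degree vertices. But the proof has a genuine gap exactly where you acknowledge one: the residual graph. Your route needs $\sum_{v\in V(G')}d(v)^\alpha=O\bigl(m^{(\alpha^2-\alpha+1)/\alpha}\bigr)$, and this is never established. The only tool you invoke for it is the top-$k$ degree bound coming from $e(G'[V_k])\le C_1k^\alpha$, but that constraint (the crossing edges contribute an unconstrained $+m$) does not prevent highly skewed degree sequences, and your own caveat ``provided the degree sequence of $G'$ is sufficiently balanced'' is precisely what cannot be assumed: pruning vertices of degree below $c\,m^{1-1/\alpha}$ does nothing to cap the maximum degree, and near-regularity is unavailable in general (the paper's complete tripartite example with parts $n,n^{1/2},n^{1/3}$ is exactly a warning that degree ratios blow up). Neither proposed fix closes this: Theorem~\ref{Many_Kr_indicates_subgraph_with_many_Kr} produces a subgraph with many cliques, not one with controlled maximum degree, and ``iterate the pruning with a refined threshold'' is not carried out. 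Note also that the easy regime is $\alpha\ge2$ (there the trivial bound $\sum_{G'}d^\beta\le 2m\,\Delta^{\beta-1}\le 2m\,|V(G')|^{\beta-1}$ already gives $O(m^{1+(\beta-1)/\alpha})\le O(m^f)$), so the unresolved case $1<\alpha<2$ is precisely the one the theorem is used for ($K_{s,t}$-free graphs have $\alpha=2-1/s$). The $\beta>\alpha$ case is also not a side issue (for $\mathrm{mex}(m,K_r,K_{s,t})$ with $r\ge4$ one has $\beta>\alpha$), and the interpolation you sketch, $\sum d^\beta\le\Delta^{\beta-\alpha}\sum d^\alpha$ with $\Delta\le|V(G')|=O(m^{1/\alpha})$, falls short of $m^f$ unless one additionally knows $\Delta=O(m^{1-1/\alpha})$ --- again the unavailable near-regularity.

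For comparison, the paper avoids all degree-distribution analysis by a minimal-counterexample argument: take an $F$-free $G_0$ of minimum order with $k_{r+1}(G_0)>Cm^f$; deleting any vertex $v$ and using minimality together with $k_r(G_0[N(v)])\le C_2d(v)^\beta$ forces $C_2d(v)^\beta\ge Cm^{f-1}d(v)$, i.e.\ every degree is at least $(Cm^{f-1}/C_2)^{1/(\beta-1)}\sim m^{1-1/\alpha}$; summing degrees gives $n\le 2(C_2/C)^{1/(\beta-1)}m^{1/\alpha}$, which for $C$ large contradicts $m\le\mathrm{ex}(n,F)\le C_1n^\alpha$. Your pruned-graph framework could be salvaged only by supplying the missing power-sum estimate (which in concrete cases like $C_4$-free graphs requires structural input such as $\sum_v\binom{d_v}{2}\le\binom{n}{2}$, i.e.\ strictly more than the global $\mathrm{ex}(n,F)$ bound), so as it stands the argument does not prove the theorem.
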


As a consequence, the upper bounds on the generalized Tur\'an number $\mathrm{ex}(n, K_r, F)$ carry over directly to those on $\mathrm{mex}(m, K_r, F)$. For instance, Alon and Shikhelman~\cite{Alon_Many T copies in H-free graphs} showed $\mathrm{ex}(n,K_r,K_{s,t}) = O(n^{r - \frac{r(r-1)}{2s}})$ for fixed $r \geq 2$ and $t \geq s \geq r-1$.
Therefore, Theorem \ref{Upperbound_of_Kr_in_some_F_free} yields the following.

\begin{cor}\label{UpperBound_K_r_in_K_st-free_Fixed_Edge}
    For any fixed $r\ge 3$ and $t\ge s\ge r$,
    $$
    \mathrm{mex}(m,K_r,K_{s,t})=O\left(m^\frac{rs-\binom{r}{2}}{2s-1}\right).
    $$
    In particular, for fixed $r,s\ge 2r-2$ and $t\ge(s-1)!+1$,
    \begin{align}\label{mex_K_r_K_{s,t}}
        \mathrm{mex}(m,K_r,K_{s,t})=\Theta\left(m^\frac{rs-\binom{r}{2}}{2s-1}\right).
    \end{align}
    When $r=3$, (\ref{mex_K_r_K_{s,t}}) also holds for $s\geq 2$ and $t\geq (s-1)!+1$.
\end{cor}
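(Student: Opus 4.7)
The plan is to derive the upper bound directly from Theorem~\ref{Upperbound_of_Kr_in_some_F_free} and to match it with a lower bound coming from known algebraic constructions of $K_{s,t}$-free graphs.

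For the upper bound I would apply Theorem~\ref{Upperbound_of_Kr_in_some_F_free} to $F=K_{s,t}$, with $r-1$ in place of $r$. The K\H{o}v\'ari-S\'os-Tur\'an bound gives $\mathrm{ex}(n,K_{s,t})=O(n^{2-1/s})$, so one takes $\alpha=2-1/s$. Since $s\le t$, deleting a vertex from the part of size $s$ produces $K_{s-1,t}$, and the Alon-Shikhelman bound quoted just before Corollary~\ref{UpperBound_K_r_in_K_st-free_Fixed_Edge} yields $\mathrm{ex}(n,K_{r-1},K_{s-1,t})=O(n^{r-1-(r-1)(r-2)/(2(s-1))})$, which is valid precisely when $s\ge r$; hence $\beta=r-1-\tfrac{(r-1)(r-2)}{2(s-1)}$. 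Since $1-1/\alpha=\tfrac{s-1}{2s-1}$, a short algebraic simplification gives
$$f(\alpha,\beta)=1+(\beta-1)\!\left(1-\tfrac{1}{\alpha}\right)=\frac{r(2s-r+1)}{2(2s-1)}=\frac{rs-\binom{r}{2}}{2s-1},$$
which is exactly the claimed exponent.

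For the matching lower bound I would invoke the known estimate $\mathrm{ex}(n,K_r,K_{s,t})=\Omega(n^{r-r(r-1)/(2s)})$ in the stated parameter ranges. When $s\ge 2r-2$ and $t\ge(s-1)!+1$ this is the lower bound of Alon-Shikhelman~\cite{Alon_Many T copies in H-free graphs}, obtained by counting cliques inside the projective norm graphs of Koll\'ar-R\'onyai-Szab\'o and Alon-R\'onyai-Szab\'o. The $r=3$ case with $s\ge 2$ and $t\ge(s-1)!+1$ follows from the same construction, or alternatively by a direct convexity / supersaturation count of triangles in any $K_{s,t}$-free graph achieving $\Theta(n^{2-1/s})$ edges. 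In each case the extremal graph has $m=\Theta(n^{2-1/s})$, so $n=\Theta(m^{s/(2s-1)})$, and substitution gives
$$k_r=\Omega\bigl(n^{r-r(r-1)/(2s)}\bigr)=\Omega\bigl(m^{(rs-\binom{r}{2})/(2s-1)}\bigr),$$
matching the upper bound.

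The main obstacle is the lower bound for $r\ge 3$: one must verify that the projective norm graph (or its variants) genuinely contains the predicted number of $K_r$-copies, which requires controlling how often small vertex-tuples share many common neighbours in the pseudorandom structure of these algebraic graphs. The hypothesis $s\ge 2r-2$ provides exactly the slack needed to push the clique-counting argument through, while $t\ge(s-1)!+1$ is the threshold above which the Koll\'ar-R\'onyai-Szab\'o / Alon-R\'onyai-Szab\'o graphs are $K_{s,t}$-free; for $r=3$ the triangle count only needs $s\ge 2$, which is why the extra $s\ge 2r-2=4$ hypothesis can be dropped in that case. The upper half, by contrast, is essentially a direct substitution into Theorem~\ref{Upperbound_of_Kr_in_some_F_free}.
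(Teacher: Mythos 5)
Your proposal is correct and follows essentially the same route as the paper: the upper bound comes from plugging $\alpha=2-\tfrac1s$ (K\H{o}v\'ari--S\'os--Tur\'an) and $\beta=r-1-\tfrac{(r-1)(r-2)}{2(s-1)}$ (the Alon--Shikhelman bound for $\mathrm{ex}(n,K_{r-1},K_{s-1,t})$) into Theorem~\ref{Upperbound_of_Kr_in_some_F_free}, and the matching lower bound comes from the $K_r$-counts in the projective norm-graphs (Lemma~\ref{Lower_n_Kr}) with $m=\Theta(n^{2-1/s})$, exactly as in the paper. Only trivial quibbles: the quoted Alon--Shikhelman bound already applies once $s\ge r-1$ (not ``precisely when $s\ge r$''), and, like the paper, you leave implicit the easy upper bound in the boundary case $r=3$, $s=2$, where $\beta=1$ and one instead notes directly that each edge lies in at most $t-1$ triangles.
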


Let $\text{ex}_{u}(t, K_r, F)$ be the maximum number of $K_r$'s in an $F$-free graph $G$ containing exactly $t$ copies of $u$-cliques. This notion was recently introduced by Kirsch in~\cite{Kirsch_25_Intro}, and has been studied in~\cite{15_Aragao_Intro_tku, 14_Kirsch_Intro_Ku, Kirsch_25_Intro, 14_Kirsch_Intro_edge}. In particular, for $u = 1$ and $u = 2$, this notation coincides with $\text{ex}(n, K_r, F)$ and $\text{mex}(m, K_r, F)$, respectively. We now derive a general lower bound for the number of $K_r$'s in a $F$-free graph.

\begin{thm}\label{Lowerbound_of_Kr_in_some_F_free}
     Let $r>u\ge2$ and  $F=(V(F),E(F))$ with $v(F)> 2$ and $e(F)>\frac{r-1}{2}v(F)+\binom r 2-(r-1)$ and $\max\limits_{F_0\subseteq F,e(F_0)>0}\frac{2e(F_0)}{v(F_0)}<\frac{2e(F)-r(r-1)}{v(F)-2}$. 
     For sufficiently large $t$, we have
     $$
     \mathrm{ex}_u(t,K_r,F)\geq C_{u,r}k_u(G)^{\frac{re(F)-\binom r 2 v(F)-r\binom r 2+2\binom r 2}{ue(F)-\binom u 2 v(F)-u\binom r 2+2\binom u 2}  },
     $$
     where $C_{u,r}$ is a constant depending on $F,u$ and $r$.
\end{thm}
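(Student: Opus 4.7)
The plan is to construct the desired $F$-free graph by the probabilistic deletion (alteration) method applied to an Erd\H{o}s--R\'enyi random graph. Take $G\sim G(n,p)$ with
$$p = c\, n^{-(v(F)-2)/(e(F) - \binom{r}{2})}$$
for a sufficiently small constant $c=c(F,r)$. By linearity of expectation, $\mathbb{E}[k_i(G)]=\Theta(n^i p^{\binom{i}{2}})$ for every $i\ge 2$ and $\mathbb{E}[\#F\text{-copies in }G]=\Theta(n^{v(F)} p^{e(F)})$. Substituting the value of $p$ converts $\mathbb{E}[k_r(G)]$ and $\mathbb{E}[k_u(G)]$ to powers of $n$ whose exponents are precisely the numerator and denominator of the target exponent in the theorem. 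The edge-count hypothesis $e(F)>\tfrac{r-1}{2}v(F)+\binom{r}{2}-(r-1)$ rearranges to $re(F)>\binom{r}{2}(v(F)+r-2)$, which is exactly the condition under which the exponent of $\mathbb{E}[k_r(G)]$ is positive; the analogous statement for $u$ follows from $u<r$.

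Next, perform the alteration: delete one edge from each $F$-copy in $G$ to obtain an $F$-free subgraph $G_0$. A union-bound estimate shows that the expected number of destroyed $K_r$'s is at most
$$\binom{r}{2}\cdot\mathbb{E}[\#F\text{-copies}]\cdot n^{r-2}p^{\binom{r}{2}-1}=O\!\left(n^{v(F)+r-2}p^{e(F)+\binom{r}{2}-1}\right).$$
Dividing by $\mathbb{E}[k_r(G)]$ gives the ratio $n^{(v(F)-2)(1-\binom{r}{2})/(e(F)-\binom{r}{2})}$, which tends to $0$ because $r\ge 3$ forces $1-\binom{r}{2}<0$ while the edge-count hypothesis keeps $e(F)-\binom{r}{2}>0$. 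The identical calculation applies to $K_u$-copies (using $u\ge 2$). Hence some realization satisfies $k_r(G_0)\ge\tfrac12\mathbb{E}[k_r(G)]$, $k_u(G_0)\ge\tfrac12\mathbb{E}[k_u(G)]$, and has no copy of $F$. Eliminating $n$ between the two expectation estimates gives $k_r(G_0)\ge C\cdot k_u(G_0)^{\rho}$ with $\rho$ equal to the stated exponent. A standard trimming argument (removing vertices one by one to match the $K_u$-count to exactly $t$) then yields the bound for $\mathrm{ex}_u(t,K_r,F)$.

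The main obstacle is justifying that ``expected'' deletions may be replaced by almost-sure deletions, and this is where the density hypothesis enters. Rewritten as $\tfrac{e(F_0)}{v(F_0)}<\tfrac{e(F)-\binom{r}{2}}{v(F)-2}$ for every subgraph $F_0\subseteq F$ with $e(F_0)>0$, the hypothesis ensures $\mathbb{E}[\#F_0]=\omega(1)$ for all such $F_0$, and by Janson's inequality (or a second-moment computation) one obtains concentration of $\#F$-copies, $k_r(G)$, and $k_u(G)$ around their means. Once this concentration is in place, picking a realization in which all three counts are within a constant factor of their expectations and performing the deletion and trimming steps completes the proof; the only non-routine work is the concentration verification, after which everything reduces to bookkeeping.
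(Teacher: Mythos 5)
Your proposal follows essentially the same route as the paper's proof: the same random graph $G(n,p)$ with $p=\Theta\bigl(n^{-(v(F)-2)/(e(F)-\binom{r}{2})}\bigr)$, second-moment (Chebyshev/Janson-type) concentration of $k_u$, $k_r$ and the $F$-count justified by the two hypotheses, deletion of one edge per $F$-copy, and the same exponent arithmetic relating $t$, $n$, and the clique counts. The only real difference is bookkeeping in the deletion step — you bound the destroyed $K_r$'s in expectation through the deleted edge (which, once the overlap configurations are included in the first-moment count, is indeed $o(E[k_r(G)])$), whereas the paper bounds them deterministically by $\binom{n-2}{r-2}$ per deleted edge and compensates by taking the constant in $p$ small — so in substance this is the same proof.
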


The rest of the paper is organized as follows. In Section 2, we show some useful lemmas. We prove Theorems~\ref{Many_Kr_indicates_subgraph_with_many_Kr}, \ref{Upperbound_of_Kr_in_some_F_free} and \ref{Lowerbound_of_Kr_in_some_F_free}, along with some corollaries in Section 3. In Section 4, we show the applications of these theorems to some widely studied graph classes, such as $K_{s_1,\dots,s_r}$-free graphs. Finally, we conclude with remarks and discussions in Section 5.

\section{Preliminaries}

Let $G=(V,E)$ be a graph and $r\geq 2$ be an integer. Suppose that $v\in V$ and $e\in E$. Let $N_G(v)$ be the neighborhood of $v$ in $G$. We denote by $\mathcal{N}_{v}(H,G)$ (respectively, $\mathcal{N}_{e}(H,G)$) the number of $H$-copies that contain $v$ (respectively, $e$) in $G$. Our first lemma, which gives an upper bound of $k_r(G)$ by $k_u(G)$ for $r>u\geq 1$, is a direct generalization of the fact that $m<\frac{1}{2}n^2$ and $k_3(G)< \frac{\sqrt{2}}{3}m^{\frac{3}{2}}$ for any graph $G$ of order $n$ and size $m$.

\begin{lem}\label{Max_Ku_in_G_by_Kr}
Let $G$ be a graph and $r>u \geq 1$. 
\begin{align}\label{K_r<K_u}
k_r(G)<C(u, r)k_u(G)^{\frac{r}{u}},
\end{align} 
where $C(u, r)=\frac{(u!)^\frac r u}{r!}$. Furthermore, $k_r(G)=O(m^{\frac{r}{2}})$ for any graph $G$ of size $m$.
\end{lem}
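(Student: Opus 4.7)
My plan is to derive (\ref{K_r<K_u}) from the Kruskal--Katona theorem, applied to the family $\mathcal{F}$ of $K_r$-copies of $G$ viewed as $r$-element subsets of $V(G)$. Since every $u$-element subset of an $r$-clique is itself a $u$-clique, the $u$-shadow $\partial^{(u)}\mathcal{F}$ is contained in the collection of $K_u$-copies of $G$, so $k_u(G)\ge|\partial^{(u)}\mathcal{F}|$. Assuming $k_r(G)\ge 1$ (the remaining cases being trivial), I would define the real number $x\ge r$ by $\binom{x}{r}=k_r(G)$, at which point Kruskal--Katona yields $k_u(G)\ge\binom{x}{u}$. The problem then reduces to establishing the purely algebraic inequality
\[
r!\tbinom{x}{r}<\bigl(u!\tbinom{x}{u}\bigr)^{r/u}\qquad\text{for every }x\ge r.
\]

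After expanding both sides as falling factorials, raising appropriately, and cancelling the common block $[\prod_{i=0}^{u-1}(x-i)]^{u}$, this reduces to comparing $[\prod_{i=u}^{r-1}(x-i)]^{u}$ with $[\prod_{i=0}^{u-1}(x-i)]^{r-u}$. Both sides are products of the same number $u(r-u)$ of positive factors (using $x\ge r$). Since every factor on the left is at most $x-u$ while every factor on the right is at least $x-u+1$, the strict inequality follows term-by-term, and unwinding the estimates delivers $k_r(G)<\frac{(u!)^{r/u}}{r!}k_u(G)^{r/u}$. The ``furthermore'' clause is then immediate by specializing $u=2$: (\ref{K_r<K_u}) gives $k_r(G)<\frac{2^{r/2}}{r!}m^{r/2}=O(m^{r/2})$ for any graph of size $m$.

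The only real subtlety I expect is invoking Kruskal--Katona in the correct direction: since the theorem bounds the shadow from \emph{below}, one must parametrise $x$ through $k_r(G)=\binom{x}{r}$ and then read off a lower bound for $k_u(G)$, rather than the other way around. With that choice the argument collapses to the elementary falling-factorial comparison above, so beyond some minor bookkeeping for degenerate cases where $k_u(G)$ or $k_r(G)$ vanishes, I do not anticipate any serious technical obstacle.
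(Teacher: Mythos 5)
Your argument is correct, but it is a genuinely different route from the paper's. You invoke the Kruskal--Katona theorem (in Lov\'asz's real-parameter form): writing $k_r(G)=\binom{x}{r}$ with $x\ge r$, the $u$-shadow of the family of $K_r$-vertex-sets consists of $u$-cliques, so $k_u(G)\ge\binom{x}{u}$, and the claim reduces to the elementary inequality $\prod_{i=0}^{r-1}(x-i)<\bigl(\prod_{i=0}^{u-1}(x-i)\bigr)^{r/u}$, which your factor-by-factor comparison after raising to the $u$-th power does establish (all factors are positive since $x\ge r$, and the strictness is genuine because $u(r-u)\ge 1$). The paper instead proceeds by induction on $r$: for $r=u+1$ it writes $\mathcal{N}_v(K_{u+1},G)=k_u(G[N_G(v)])$, interpolates between the induction hypothesis applied inside the neighbourhood and the trivial bound $\mathcal{N}_v(K_{u+1},G)\le k_u(G)$ (a H\"older-type splitting with exponents $1-\frac1u$ and $\frac1u$), sums over $v$, and then composes the step $r\to r+1$ through the recursion $C(u,r+1)=C(r,r+1)C(u,r)^{\frac{r+1}{r}}$. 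Your proof is shorter and conceptually sharper -- Kruskal--Katona in fact encodes the exact extremal relation between $k_r$ and $k_u$, of which the stated bound is the clean asymptotic weakening, and the constant $(u!)^{r/u}/r!$ drops out immediately -- at the price of importing a nontrivial external theorem (and specifically its Lov\'asz form, iterated down to level $u$); the paper's proof is self-contained and elementary. One caveat applies equally to both arguments: when $k_u(G)=0$ (hence $k_r(G)=0$) the strict inequality as stated is false, so the ``minor bookkeeping'' you mention cannot fully rescue the degenerate case -- this is a defect of the lemma's formulation rather than of your proof, and the paper's induction quietly suffers from the same issue.
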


\begin{proof}
    Note that for each pair of $r>u\geq 1$, $C(u,r)$ is determined by the following recursive formulas: (i) $C(1,2)=\frac{1}{2}$, (ii) $C(u,u+1)=\frac{u}{u+1}C(u-1,u)^{\frac{u-1}{u}}$ for $u\geq 2$, and (iii) $C(u,r+1)=C(r,r+1)C(u,r)^{\frac{r+1}{r}}$. 
    
    First, we prove (\ref{K_r<K_u}) for $r=u+1$ by induction. When $u=1$ and $r=2$, it is easy to see that $$k_2(G)=m<\frac{1}{2}n^2=C(1,2)n^2=C(1,2)k_1(G)^2.$$ Now suppose 
     (\ref{K_r<K_u}) holds for $u=u_0-1,r=u_0$ for some $u_0\geq 2$, then for any $v\in V(G)$, 
     \begin{align}\label{2_1_1}
         \mathcal{N}_{v}(K_{u_0+1},G)=k_{u_0}(G[N_G(v)])\le C(u_0-1,u_0)\mathcal{N}_v(K_{u_0},G)^{\frac{u_0}{u_0-1}}.
     \end{align}
    Moreover, 
    \begin{align}\label{2_1_2}
        \mathcal{N}_v(K_{u_0+1},G)\le k_{u_0}(G\backslash v)\le k_{u_0}(G).
    \end{align}
    By (\ref{2_1_1}) and (\ref{2_1_2}), we have 
    $$
    \begin{aligned}
        \sum\limits_{v\in V(G)}\mathcal{N}_{v}(K_{u_0+1},G)&\le 
         \sum_{v\in V(G)}\left(C(u_0-1,u_0)\mathcal{N}_v(K_{u_0},G)^{\frac{u_0}{u_0-1}}\right)^{1-\frac{1}{u_0}}\cdot k_{u_0}(G)^{\frac 1 {u_0}}
        \\&\leq\sum\limits_{v\in V(G)}\left(C(u_0-1,u_0)^{\frac{u_0-1}{u_0}}\mathcal{N}_v(K_{u_0},G)\cdot k_{u_0}(G)^{\frac {1}{u_0}}\right)\\
        &=C(u_0-1,u_0)^{\frac{u_0-1}{u_0}}\cdot u_0 k_{u_0}(G)\cdot k_{u_0}(G)^{\frac 1 {u_0}}\\
        &=(u_0+1)C(u_0,u_0+1)k_{u_0}(G)^{\frac {u_0+1} {u_0}}.
    \end{aligned}
    $$
    Therefore, $k_{u+1}(G)<C(u,u+1)k_u(G)^{\frac{u+1}{u}}$. And by induction, for any fixed $u$ and $k\ge 1$, by (ii) we have
    $$
    \begin{aligned}
        k_{u+k+1}(G)&<C(u+k,u+k+1)\left(C(u,u+k)k_u(G)^{\frac{u+k}{u}}\right)^{\frac{u+k+1}{u+k}}\\&=C(u,u+k+1)k_u(G)^{\frac{u+k+1}{u}}, 
    \end{aligned}
    $$ 
    where $C(u,u+k+1)=C(u+k,u+k+1)C(u,u+k)^{\frac{u+k+1}{u+k}}$.
\end{proof}

We point out that $C(u, r)$ given in Lemma~\ref{Max_Ku_in_G_by_Kr} is asymptotically tight. In fact, consider the sequence $\{K_l\}$ of complete graphs of size $l$, then we have: $$C(u,r)=\frac{\left(u!\right)^{\frac{r}{u}}}{r!}=\lim_{l\to \infty}\frac{k_r(K_l)}{k_u(K_l)^{\frac{r}{u}}}.$$
In particular, when $u = 1$, the conclusion of Lemma~\ref{Max_Ku_in_G_by_Kr} becomes $k_r(G) \leq C(1, r) n^r$. Alon and Shikhelman~\cite{Alon_Many T copies in H-free graphs} gave the following necessary and sufficient condition for an $F$-free graph $G$ to contain $\Theta(n^r)$ $K_r$-copies.

\begin{lem}~\cite{Alon_Many T copies in H-free graphs}\label{Alon_Kr_O(n^r)_iff_chi(F)>r}
	For any graph $F$, $\mathrm{ex}(n,K_t,F)=\Theta(n^t)$ if and only if $\chi(F)>t$. 
\end{lem}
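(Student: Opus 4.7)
The plan is to prove both directions of the equivalence separately. For the ``if'' direction, assume $\chi(F) > t$. The upper bound $\mathrm{ex}(n, K_t, F) = O(n^t)$ is immediate from Lemma~\ref{Max_Ku_in_G_by_Kr} with $u = 1$, which gives $k_t(G) \leq C(1,t)\, n^t$ for every $n$-vertex graph $G$. For the matching lower bound, I would take the balanced complete $t$-partite Tur\'an graph $T(n, t)$. Since $\chi(T(n,t)) = t < \chi(F)$, this graph is $F$-free, and selecting one vertex from each of its $t$ parts produces $(1+o(1))(n/t)^t = \Omega(n^t)$ copies of $K_t$.

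For the ``only if'' direction I would prove the contrapositive: if $\chi(F) \leq t$, then $\mathrm{ex}(n, K_t, F) = o(n^t)$. Setting $r = v(F)$, the hypothesis $\chi(F) \leq t$ yields an embedding of $F$ into the balanced complete $t$-partite graph $K_{r, r, \ldots, r}$ (one part per color class of $F$, each padded up to size $r$). Consequently every graph that contains $K_{r,\ldots,r}$ as a subgraph also contains $F$, so any $F$-free graph $G$ is automatically $K_{r,r,\ldots,r}$-free.

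It therefore suffices to show that any graph $G$ on $n$ vertices with $k_t(G) \geq \varepsilon n^t$ must contain $K_{r,r,\ldots,r}$ once $n$ is sufficiently large in terms of $\varepsilon, t$ and $r$. The key tool is the classical Erd\H{o}s supersaturation theorem for complete multipartite hypergraphs, applied to the $t$-uniform hypergraph $\mathcal{H}$ on $V(G)$ whose edges are the $t$-cliques of $G$: any $t$-uniform hypergraph on $n$ vertices with at least $\varepsilon \binom{n}{t}$ edges contains a complete $t$-partite sub-hypergraph $K^{(t)}_{r,\ldots,r}$. This yields disjoint $V_1, \ldots, V_t \subseteq V(G)$ of size $r$ such that every transversal $(v_1, \ldots, v_t) \in V_1 \times \cdots \times V_t$ is a clique in $G$; in particular every cross-pair $v_i v_j$ with $i \neq j$ is an edge of $G$, exhibiting the forbidden $K_{r,\ldots,r}$.

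The main obstacle I expect is justifying the hypergraph supersaturation step cleanly: one can either invoke Erd\H{o}s's 1964 hypergraph theorem as a black box, or reprove it by a short double-counting / averaging argument (iteratively picking $r$ vertices into each part and applying convexity in the spirit of K\H{o}v\'ari--S\'os--Tur\'an). All other steps---the Tur\'an graph lower bound, the chromatic embedding into $K_{r,\ldots,r}$, and the transversal-to-clique translation---are routine bookkeeping.
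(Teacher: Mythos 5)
Your argument is correct and is essentially the proof from the cited source: the paper itself gives no proof of this lemma (it is quoted from Alon and Shikhelman), and their original argument is exactly your two steps --- the Tur\'an graph $T(n,t)$ for the lower bound when $\chi(F)>t$, and, when $\chi(F)\le t$, the embedding of $F$ into a blow-up $K_{r,\ldots,r}$ of $K_t$ combined with Erd\H{o}s's theorem that the $t$-uniform hypergraph of $t$-cliques with $\Omega(n^t)$ edges contains $K^{(t)}_{r,\ldots,r}$, whose transversals force the multipartite graph in $G$. The only cosmetic point is that the step you call supersaturation needs nothing more than Erd\H{o}s's extremal bound $\mathrm{ex}(n,K^{(t)}_{r,\ldots,r})=O\bigl(n^{t-r^{1-t}}\bigr)=o(n^t)$, which you may cite as a black box.
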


The following two lemmas will be used in the proof of 
Theorem~\ref{Lowerbound_of_Kr_in_some_F_free}. Let $X$ be a random variable. We denote its expectation by $E(X)$ and its variance by $\operatorname{Var}(X)$. We say that $0 < p = p(n)$ is bounded away from $1$ 
if $\limsup_{n\to\infty} p(n) < 1 $.

\begin{lem}[Chebyshev's Inequality]\label{Chebyshev}
    Let $X$ be a random variable with finite expectation $\mu = E(X)$ and finite variance $\sigma^2 = \operatorname{Var}(X)$. For any positive real number $k > 0$, we have 
    $$ P(|X - \mu| \geq k \sigma) \leq \frac{1}{k^2}.$$
\end{lem}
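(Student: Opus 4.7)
The plan is to derive Chebyshev's inequality from a direct second-moment estimate, which is essentially Markov's inequality applied to the non-negative random variable $Y = (X-\mu)^2$. The key observation is that the event $\{|X-\mu| \geq k\sigma\}$ is the same as the event $\{Y \geq k^2\sigma^2\}$, so controlling the tail of $|X-\mu|$ reduces to controlling the tail of a non-negative variable whose expectation is exactly $\sigma^2$.

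First I would handle the degenerate case $\sigma = 0$ separately: in that case $X = \mu$ almost surely, so $P(|X-\mu| \geq k\sigma) = P(|X-\mu| > 0) = 0$, and the inequality holds trivially. Assume henceforth $\sigma > 0$. Then I would write
\begin{equation*}
\sigma^2 \;=\; E\bigl[(X-\mu)^2\bigr] \;\geq\; E\bigl[(X-\mu)^2 \cdot \mathbf{1}_{\{|X-\mu|\geq k\sigma\}}\bigr] \;\geq\; k^2\sigma^2 \cdot P\bigl(|X-\mu|\geq k\sigma\bigr),
\end{equation*}
where the first inequality uses non-negativity of the integrand and the second uses that on the event in question we have $(X-\mu)^2 \geq k^2\sigma^2$ pointwise. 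Dividing through by $k^2 \sigma^2 > 0$ yields $P(|X-\mu|\geq k\sigma) \leq 1/k^2$, as required.

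There is essentially no obstacle here; the proof is a one-line consequence of pointwise bounding plus monotonicity of expectation. The only subtlety worth flagging in writing is whether $X$ is assumed discrete, continuous, or general. Since the lemma is stated only for random variables with finite mean and variance and is used in the paper only as a concentration tool, the general formulation via the Lebesgue integral (or equivalently as Markov's inequality applied to $(X-\mu)^2$) covers all cases uniformly, and no distinction between discrete sums and continuous integrals needs to be made in the argument above.
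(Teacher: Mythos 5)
Your proof is correct: it is the standard derivation of Chebyshev's inequality by bounding $\sigma^2 = E[(X-\mu)^2]$ from below on the event $\{|X-\mu|\ge k\sigma\}$ (equivalently, Markov's inequality applied to $(X-\mu)^2$), and you correctly dispose of the degenerate case $\sigma=0$. The paper states this lemma as a classical fact without proof, so there is nothing to compare against; your argument is the canonical one and is complete.
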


\begin{lem}\label{Upperbound_var_H}~\cite{Janson_Random}
Let $H$ be a fixed graph with vertex set $V(H) \subseteq[n]$ and $p=p(n)$ be bounded away from 1. Let $G \sim {G}(n, p)$ and $X_H$ be the number of $H$-copies in $G(n,p)$.
$$
\frac{\operatorname{Var}(X_H)}{E(X_H)^2}=\Theta\left(\frac{1}{\Phi_H}\right),
$$
where, $\Phi_H=\min \left\{\mathbb{E}\left(X_{H_0}\right): H_0 \subseteq H, e_{H_0}>0\right\}=\Theta\left(\min\limits_{H_0\subseteq H,e_{H_0}>0}n^{v(H_0)}p^{e(H_0)}\right)$.
\end{lem}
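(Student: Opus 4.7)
The plan is the probabilistic alteration method on an Erd\H{o}s--R\'enyi random graph. I would set
\[
\alpha=\frac{v(F)-2}{e(F)-\binom{r}{2}},\qquad p=c\,n^{-\alpha},
\]
where $c>0$ is a small constant to be fixed later and $n$ is chosen so that $\mathbb{E}[X_{K_u}]=\Theta(n^{u-\alpha\binom{u}{2}})\in[t,2t]$. This is possible for all large $t$ because the hypothesis $e(F)>\tfrac{r-1}{2}v(F)+\binom{r}{2}-(r-1)$ rearranges to $\alpha<2/(r-1)\le 2/(u-1)$, so $u-\alpha\binom{u}{2}>0$ and in fact $\mathbb{E}[X_{K_j}]\to\infty$ for every $2\le j\le r$.

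For concentration, the density hypothesis $\max_{F_0\subseteq F,\,e(F_0)>0}\tfrac{2e(F_0)}{v(F_0)}<\tfrac{2e(F)-r(r-1)}{v(F)-2}=\tfrac{2}{\alpha}$ is equivalent to $\alpha<v(F_0)/e(F_0)$ for every non-empty subgraph, so $\mathbb{E}[X_{F_0}]=\Theta(n^{v(F_0)-\alpha e(F_0)})\to\infty$. Lemma~\ref{Upperbound_var_H} then gives $\mathrm{Var}(X_F)=o(\mathbb{E}[X_F]^2)$, and the same lemma applied to $H=K_u,K_r$ yields $\mathrm{Var}(X_{K_u})=o(\mathbb{E}[X_{K_u}]^2)$ and $\mathrm{Var}(X_{K_r})=o(\mathbb{E}[X_{K_r}]^2)$. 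Chebyshev's inequality (Lemma~\ref{Chebyshev}) and a union bound produce a realization $G$ satisfying
\[
X_F(G)\le 2\mathbb{E}[X_F],\quad X_{K_u}(G)\ge \tfrac12\mathbb{E}[X_{K_u}],\quad X_{K_r}(G)\ge \tfrac12\mathbb{E}[X_{K_r}].
\]

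Fix such a $G$ and delete one edge from every $F$-copy to obtain an $F$-free subgraph $G'$. Since each edge lies in at most $\binom{n-2}{j-2}=O(n^{j-2})$ copies of $K_j$, at most $O(X_F\cdot n^{r-2})$ copies of $K_r$ are destroyed. A direct substitution gives
\[
\frac{\mathbb{E}[X_F]\,n^{r-2}}{\mathbb{E}[X_{K_r}]}=\Theta\bigl(c^{e(F)-\binom{r}{2}}\cdot n^{v(F)-2-\alpha(e(F)-\binom{r}{2})}\bigr)=\Theta\bigl(c^{e(F)-\binom{r}{2}}\bigr),
\]
and the analogous ratio for $K_u$ picks up the extra factor $n^{-\alpha(\binom{r}{2}-\binom{u}{2})}=o(1)$. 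Choosing $c$ small enough yields $X_{K_r}(G')\ge \tfrac14\mathbb{E}[X_{K_r}]$ and $X_{K_u}(G')\ge \tfrac14\mathbb{E}[X_{K_u}]$. Substituting $n\asymp t^{1/(u-\alpha\binom{u}{2})}$ gives
\[
X_{K_r}(G')=\Omega\bigl(t^{(r-\alpha\binom{r}{2})/(u-\alpha\binom{u}{2})}\bigr),
\]
and routine algebra in which the factor $e(F)-\binom{r}{2}$ cancels converts this exponent to the stated ratio $\bigl(re(F)-\binom{r}{2}v(F)-r\binom{r}{2}+2\binom{r}{2}\bigr)/\bigl(ue(F)-\binom{u}{2}v(F)-u\binom{r}{2}+2\binom{u}{2}\bigr)$. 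A short pruning step removes $O(1)$ extra edges to bring $X_{K_u}$ from its constant-factor overshoot down to exactly $t$, changing $X_{K_r}$ only by a constant that is absorbed into $C_{u,r}$.

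The main obstacle is that with $\alpha$ chosen precisely to hit the claimed exponent, the crude ``edge-in-at-most-$n^{r-2}$-cliques'' estimate only brings the ratio $\mathbb{E}[X_F]\,n^{r-2}/\mathbb{E}[X_{K_r}]$ down to a positive constant, not to $o(1)$; the edge-deletion step would otherwise fail to preserve a positive fraction of the $K_r$-copies. The remedy built into the plan is the free multiplicative constant $c$ in $p=cn^{-\alpha}$: the ratio scales as $c^{e(F)-\binom{r}{2}}$, which can be shrunk at will, the loss then absorbed into $C_{u,r}$. In this way the two numerical hypotheses on $e(F)$ are used in sharply distinct roles: the lower bound on $e(F)$ controls $\alpha<2/(r-1)$ (so $\mathbb{E}[X_{K_r}]\to\infty$), while the maximum-density hypothesis is precisely what Lemma~\ref{Upperbound_var_H} requires for concentration of $X_F$.
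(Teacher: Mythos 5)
Your proposal does not prove the statement at hand. The statement is Lemma~\ref{Upperbound_var_H}, the second-moment estimate $\operatorname{Var}(X_H)/\mathbb{E}(X_H)^2=\Theta(1/\Phi_H)$ for subgraph counts in $G(n,p)$, which the paper imports from \cite{Janson_Random} and uses as a tool. What you have written is instead an outline of the alteration argument for Theorem~\ref{Lowerbound_of_Kr_in_some_F_free} (the lower bound on $\mathrm{ex}_u(t,K_r,F)$): you choose $p=c\,n^{-\alpha}$ with $\alpha=(v(F)-2)/(e(F)-\binom{r}{2})$, delete an edge from every $F$-copy, and track the surviving $K_r$- and $K_u$-counts. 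Moreover, your argument explicitly invokes Lemma~\ref{Upperbound_var_H} to obtain concentration of $X_F$, $X_{K_u}$ and $X_{K_r}$, so read as a proof of that lemma it is circular: you assume the conclusion in order to run the construction.

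A genuine proof of the lemma would go through the second-moment expansion: write $\operatorname{Var}(X_H)$ as a sum over ordered pairs of potential copies $H',H''$ of the covariance $p^{2e(H)-e(H'\cap H'')}-p^{2e(H)}$, observe that edge-disjoint pairs contribute zero, and group the remaining pairs by the isomorphism type of the edge-intersection $H_0\subseteq H$ with $e(H_0)>0$. The pairs of type $H_0$ number $\Theta(n^{2v(H)-v(H_0)})$ and each contributes $\Theta(p^{2e(H)-e(H_0)})$ (here the hypothesis that $p$ is bounded away from $1$ guarantees $1-p^{e(H_0)}=\Theta(1)$), giving a total of $\Theta\bigl(\mathbb{E}(X_H)^2/\mathbb{E}(X_{H_0})\bigr)$ for that type; maximizing over $H_0$ yields $\operatorname{Var}(X_H)=\Theta\bigl(\mathbb{E}(X_H)^2/\Phi_H\bigr)$. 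None of this appears in your write-up. (Separately, your sketch is in fact close in spirit to the paper's proof of Theorem~\ref{Lowerbound_of_Kr_in_some_F_free}, but that is not the statement you were asked to prove.)
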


For a graph $H$, we aim to determine whether $\Phi_H = \Omega(1)$. We define the maximum average degree of the induced graphs of $H$ to be $\underset{H_0 \subseteq H, e_{H_0} > 0}{\max} \frac{2e(H_0)}{v(H_0)}$. Now, we prove that for any complete $r$-partite graph $H$, the maximum average degree is attained by $H$ itself.

\begin{lem}\label{Min_Phi_KPI_2}
    For every positive integer $r \geqslant 2$ and positive integers $1 \le s_1 \leqslant s_2 \leqslant \ldots \leqslant s_r$,
    $$
    \underset{F \subseteq K_{s_1,\ldots,s_r}, v(F)>0}{\max } \frac{2e\left(F\right)}{v\left(F\right)}= \frac{2e\left(K_{s_1,\ldots,s_r}\right)}{v\left(K_{s_1,\ldots,s_r}\right)}.
    $$
\end{lem}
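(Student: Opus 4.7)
The plan is to reduce the problem to complete multipartite subgraphs and then show monotonicity in each part size. First I would observe that for any subgraph $F \subseteq K_{s_1,\ldots,s_r}$ with $v(F)>0$, the ratio $2e(F)/v(F)$ is bounded above by the ratio for the induced subgraph on $V(F)$, and an induced subgraph of $K_{s_1,\ldots,s_r}$ is itself of the form $K_{a_1,\ldots,a_r}$ where $a_i=|V(F)\cap V_i|$ and $0\le a_i\le s_i$. Hence it suffices to prove that
$$g(a_1,\ldots,a_r):=\frac{2\sum_{i<j}a_i a_j}{\sum_i a_i}$$
is maximized, over $(a_1,\ldots,a_r)\in\prod_i\{0,1,\ldots,s_i\}$ with $\sum a_i>0$, at $a_i=s_i$ for all $i$.

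Next, I would show that $g$ is non-decreasing in each coordinate $a_k$ (treated as a real variable on $[0,s_k]$) when the others are fixed and not all zero. Setting $A=\sum_{j\neq k}a_j$ and $B=\sum_{i<j,\ i,j\neq k}a_i a_j$, one has $\sum_{i<j}a_ia_j=B+a_kA$ and $\sum_i a_i=A+a_k$, so
$$g(a_k)=\frac{2(B+a_kA)}{A+a_k},\qquad \frac{\partial g}{\partial a_k}=\frac{2(A^2-B)}{(A+a_k)^2}.$$
The identity $A^2=\sum_{j\neq k}a_j^2+2B$ immediately gives $A^2-B=\sum_{j\neq k}a_j^2+B\ge 0$, so the derivative is non-negative. (The degenerate case $A=0$ is handled separately, where $g(a_k)=0$ for all $a_k$.)

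Finally, I would iterate this coordinatewise monotonicity: starting from any feasible $(a_1,\ldots,a_r)$, raising each coordinate in turn to its upper bound $s_i$ cannot decrease $g$, and terminates at $(s_1,\ldots,s_r)$. This yields $g(a_1,\ldots,a_r)\le g(s_1,\ldots,s_r)=2e(K_{s_1,\ldots,s_r})/v(K_{s_1,\ldots,s_r})$, completing the proof. I do not anticipate a genuine obstacle here; the only mild care needed is to handle the boundary cases where some $a_i=0$ or $A=0$ so that one stays within the stipulated regime $v(F)>0$, but these are routine.
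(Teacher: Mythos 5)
Your proof is correct and takes essentially the same approach as the paper: reduce to induced complete multipartite subgraphs parametrized by part sizes $(a_1,\ldots,a_r)$ and show the ratio $2\sum_{i<j}a_ia_j/\sum_i a_i$ is coordinatewise non-decreasing, so the maximum is attained at $(s_1,\ldots,s_r)$. Your identity $A^2-B=\sum_{j\neq k}a_j^2+B\ge 0$ simply makes explicit the non-negativity of the partial derivative that the paper asserts, and your treatment of zero coordinates is a minor point the paper avoids by restricting to $x_i\ge 1$.
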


\begin{proof}
    Note that 
    $$
    \underset{F \subseteq K_{s_1,\ldots,s_r}, v(F)>0}{\max } \frac{2e\left(F\right)}{v\left(F\right)}= \underset{x_i\in[s_i] \text{ for } i\in [r]}{\max}\frac{2\sum_{1 \leq i<j \leq r} x_i x_j}{\sum_{i=1}^{r} x_i},
    $$ 
    Let $f(x_1,\ldots,x_r)=\frac{\sum_{1 \leq i<j \leq r} x_i x_j}{\sum_{i=1}^{n} x_i}$. For each $1\leq i\leq r$, we have
    $$\frac{\partial f}{\partial x_i}=\frac{\left(\sum_{i=1}^r x_i\right)\left(\sum_{i=1}^r x_i-x_i\right)-\sum_{1 \leq i<j \leq r} x_i x_j}{\left({\sum_{i=1}^{n} x_i}\right)^2}\ge 0.$$
    Hence, the function $f\left(x_1, \ldots, x_r\right)$ is monotonically non-decreasing with respect to each variable $x_i\ge 1$. Therefore, 
     $$
    \underset{F \subseteq K_{s_1,\ldots,s_r}, e_{F}>0}{\max } \frac{2e\left(F\right)}{v\left(F\right)}\le \underset{x_i \in [s_i] \text{ for } i\in [r]}{\max}2f(x_1,\ldots,x_r)=2f(s_1,\ldots,s_r)=\frac{2e\left(K_{s_1,\ldots,s_r}\right)}{v\left(K_{s_1,\ldots,s_r}\right)},
    $$ 
    which proves the lemma.
\end{proof}

\section{Proof of the main theorems}


In this section, we prove Theorems~\ref{Many_Kr_indicates_subgraph_with_many_Kr},~\ref{Upperbound_of_Kr_in_some_F_free} and~\ref{Lowerbound_of_Kr_in_some_F_free}. In the proof Theorem~\ref{Many_Kr_indicates_subgraph_with_many_Kr}, we first simultaneously remove edges that are not contained in many $K_r$-copies. Then we demonstrate that there are still sufficiently many remaining  edges and remaining $K_r$-copies, while the number of vertices is upper bounded. Let $G = (V, E)$ be a graph and $V'\subseteq V$. We denote by $G[V']$ the subgraph of $G$ induced by the vertex set $V'$.
\begin{proof}[Proof of Theorem \ref{Many_Kr_indicates_subgraph_with_many_Kr}]
    Let $G=(V, E)$ and define $E_1 \subseteq E$ as follows:
    $$
    E_1=\left\{e \in E \left\lvert\, \mathcal{N}_e\left(K_r, G\right) \leq \frac{1}{2} C m^{\frac{\alpha r-2}{2}}\right.\right\},
    $$
    Let $E_2=E \backslash E_1$,
   $V_2=V(E_2)$ and $G_2$ be the subgraph of $G$ induced by the edge set $E_2$. We aim to show that $G_2$ is the desired subgraph.
   
   First we show that $\left|E_2\right| \ge \left(\frac{C}{2 C(2, r)}\right)^{\frac{2}{r}}m^{ \alpha }$, where $C(2,r)$ is defined in Lemma \ref{Max_Ku_in_G_by_Kr}. For otherwise, by Lemma \ref{Max_Ku_in_G_by_Kr}, we have
$$
\begin{aligned}
k_r(G_2)-C(2, r)\left(k_2(G_2)\right)^{\frac{r}{2}} & \geq k_r(G)-\sum_{e \in E_1} \mathcal{N}_e\left(K_r, G\right)-C(2, r)\left(m-\left|E_1\right|\right)^{\frac{r}{2}} \\
 & \geq m^{\frac{\alpha r}{2}}\left(C-\frac{C}{2} \frac{\left|E_1\right|}{m}-C(2, r)m^{\frac{(1-\alpha)r}{2}}\left(1-\frac{\left|E_1\right|}{m}\right)^{\frac{r}{2}}\right) \\
& = m^{\frac{\alpha r}{2}}\left(\frac{C}{2}+\frac C 2\frac{\left|E_2\right|}{m}-C(2,r)\left(\frac{|E_2|}{m^{\alpha}}\right)^\frac{r}{2}\right)\\
& >0,
\end{aligned}
$$
which contradicts to Lemma \ref{Max_Ku_in_G_by_Kr}.

Moreover, we have
$$k_r(G_2)\ge k_r(G)-\sum\limits_{e\in E_1}\mathcal{N}_e(K_r,G)>\frac{C}2m^{\frac {\alpha r} 2},$$ and $$|V_2|>\sqrt{2|E_2|}\ge \sqrt{2} \left(\frac{C}{2 C(2, r)}\right)^{\frac{1}{r}}m^{\frac{\alpha}{2}}.$$

Now we show an upper bound of $\left|V_2\right|$ with regard to $m$. For each $x\in V_2$, there exists an edge $e = xy \in E_2$ incident to $x$. Then we have $$\begin{aligned}
    \mathcal{N}_{e}\left(K_r,G\right)&\leq k_{r-2}\left(G[N_{G}\left(x\right)\cap N_{G}\left(y\right)]\right) \leq k_{r-2}\left(K_{\left|N_{G}\left(x\right)\cap N_{G}\left(y\right)\right|}\right)\\&\leq k_{r-2}\left(K_{d_{G}\left(x\right)}\right) =\binom{d_G\left(x\right)}{r-2}\leq \frac{d_G\left(x\right)^{r-2}}{\left(r-2\right)!},
\end{aligned}$$
which implies that $$d_G\left(x\right)\geq\left((r-2) !N_e\left(K_r, G\right)\right)^{\frac{1}{r-2}}>\left(\frac{\left(r-2\right)!}{2}Cm^{\frac{\alpha r-2}{2}}\right)^{\frac{1}{r-2}}.$$

Therefore, 
$$
2m=\sum_{x \in V} d_G(x) \geq \sum_{x \in V_2} d_G(x)>\left(\frac{\left(r-2\right)!}{2} C\right)^{\frac{1}{r-2}} m^{\frac{\alpha r-2}{2(r-2)}}\left|V_2\right|.
$$
Hence, we have $$|V_2|\le C_0 m^\frac{(2-\alpha)r-2}{2(r-2)},$$ and $$k_r(G_2)\ge\frac {C}{2} m^\frac{\alpha r}{2}\ge C_r|V_2|^{\frac{r\left(r-2\right)\alpha}{\left(2-\alpha\right)r-2}},$$ where $C_0=2\left(\frac{\left(r-2\right)!}{2} C\right)^{-\frac{1}{r-2}}$ and $C_r=\frac{C}{2} C_0^{-\frac{\alpha r}{2}\cdot\frac{(2-\alpha)r-2}{2(r-2)}}$, both of which depend only on $C,r$ and $\alpha$ as desired.

Finally, for any $2\le i< r$, by Lemma~\ref{Max_Ku_in_G_by_Kr},
$$
k_i(G_2)>\left(\frac{k_r(G_2)}{C(i,r)}\right)^\frac{i}{r}=\left(\frac{C_r}{C(i,r)}\right)^\frac{i}{r} |V_2|^{\frac{i\left(r-2\right)\alpha}{\left(2-\alpha\right)r-2}},
$$
which proves the theorem.
\end{proof}
By applying Theorem~\ref{Many_Kr_indicates_subgraph_with_many_Kr}, we can directly derive Corollary \ref{ex(n,kr,F)_indicates_mex(m,Kr,F)}.

\begin{proof}[Proof of Corollary \ref{ex(n,kr,F)_indicates_mex(m,Kr,F)}]
    To prove $\mathrm{mex}(m,K_r,F)=o(m^{\frac{\left(r-1\right)s}{r+s-2}})$, suppose, for contradiction, that there exists an constant $C$ that for any $M$, there exists an $F$-free graph $G$ of size $m\ge M$ that $k_r(G)\ge Cm^{\frac{\left(r-1\right)s}{r+s-2}}$. By Theorem \ref{Many_Kr_indicates_subgraph_with_many_Kr}, since $k_r(G)\ge Cm^{\frac{\left(r-1\right)s}{r+s-2}}$ and $\frac{2\left(r-1\right)s}{r(r+s-2)}\in(\frac{2}{r},1]$, $G$ contains a subgraph $H$ of order $n_0\ge C_1(C,r,s) m^{\frac{\left(r-1\right)s}{r(r+s-2)}}$ with $k_r(H)\ge C_2(C,r,s) n_0^s$, which contradicts to $\mathrm{ex}(n,K_r,F)=o(n^s)$.
\end{proof}

In particular, when $\alpha=1$, we can derive the following corollary from the upper bound of $|V_2|$ obtained in the proof of Theorem~\ref{Many_Kr_indicates_subgraph_with_many_Kr}.

\begin{cor}\label{Many_Kr_Indicates_Dense_m_Subgraph}
	For any fixed $C>0$ and integer $r\ge3$, there exists constants $C_1=C_1(C,r)$ and $C_2=C_2(C,r)$ such that for any graph $G$ of size $m$ with $k_r(G)\geq Cm^{\frac{r}{2}}$ has a subgraph $H$ of order $n_0$ and size $m_0$, where $n_0\geq C_1m^{\frac{1}{2}}$, $m_0\geq C_2n_0^2$ and $k_r(H)> \frac C 2m^{\frac{r}{2}}$. 
\end{cor}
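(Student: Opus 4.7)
The plan is to specialize the proof of Theorem~\ref{Many_Kr_indicates_subgraph_with_many_Kr} to the case $\alpha=1$ and read off the size bound from the intermediate inequalities, rather than only from the statement. Setting $\alpha=1$, the hypothesis $k_r(G)\ge Cm^{r/2}$ falls squarely within the range covered by Theorem~\ref{Many_Kr_indicates_subgraph_with_many_Kr}, so I would let $H$ be the subgraph $G_2$ produced there, namely the subgraph induced by the edge set
\[
E_2=\bigl\{e\in E(G):\mathcal{N}_e(K_r,G)>\tfrac12 Cm^{(r-2)/2}\bigr\}.
\]
The conclusion $k_r(H)>\tfrac{C}{2}m^{r/2}$ is exactly what the proof of Theorem~\ref{Many_Kr_indicates_subgraph_with_many_Kr} yields in one of its intermediate steps, and the lower bound $n_0=|V_2|\ge C_1 m^{1/2}$ (with $C_1=\sqrt{2}\,(C/(2C(2,r)))^{1/r}$) appears explicitly there as well.

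The one thing not stated in Theorem~\ref{Many_Kr_indicates_subgraph_with_many_Kr} itself is the size bound $m_0\ge C_2n_0^{2}$, so this is the step I would spell out. Two ingredients from the proof of Theorem~\ref{Many_Kr_indicates_subgraph_with_many_Kr} suffice. First, the clique-counting estimate gives
\[
m_0=|E_2|\ge \Bigl(\tfrac{C}{2C(2,r)}\Bigr)^{2/r}m,
\]
so $m_0=\Omega(m)$. Second, the degree argument in the same proof (counting degrees at vertices of $V_2$ via $\mathcal{N}_e(K_r,G)\le\binom{d_G(x)}{r-2}$) yields the upper bound $|V_2|\le C_0 m^{1/2}$, where $C_0=2((r-2)!C/2)^{-1/(r-2)}$. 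Rearranging gives $m\ge n_0^{2}/C_0^{2}$.

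Combining these two, I get
\[
m_0\ge\Bigl(\tfrac{C}{2C(2,r)}\Bigr)^{2/r}\cdot\frac{n_0^{2}}{C_0^{2}}=:C_2\,n_0^{2},
\]
where $C_2$ depends only on $C$ and $r$, which is precisely the desired bound. There is essentially no obstacle here, since the statement is really a repackaging of the $\alpha=1$ specialization of Theorem~\ref{Many_Kr_indicates_subgraph_with_many_Kr} together with the observation that the subgraph $G_2$ constructed in that proof is simultaneously edge-dense and vertex-small. The only minor care needed is to track the constants through both the edge lower bound and the vertex upper bound and to notice that, together with $n_0\ge C_1 m^{1/2}$, they force the density relation $m_0=\Theta(n_0^{2})$.
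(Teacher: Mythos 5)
Your proposal is correct and follows essentially the same route as the paper: the paper derives this corollary precisely by specializing the proof of Theorem~\ref{Many_Kr_indicates_subgraph_with_many_Kr} to $\alpha=1$ and using the intermediate bounds $|E_2|\ge(\frac{C}{2C(2,r)})^{2/r}m$, $k_r(G_2)>\frac{C}{2}m^{r/2}$, and $|V_2|\le C_0 m^{1/2}$, exactly as you do. Your combination of the edge lower bound with the vertex upper bound to get $m_0\ge C_2 n_0^2$ is the intended (and valid) final step.
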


Combining Corollary~\ref{ex(n,kr,F)_indicates_mex(m,Kr,F)} and Lemma~\ref{Alon_Kr_O(n^r)_iff_chi(F)>r} obtained by Alon and Shikhelman in~\cite{Alon_Many T copies in H-free graphs}, we can derive a similar corollary given the number of edges $m$. 

\begin{cor} \label{When_number_Kt_reach_m^{t/2}}
	For any graph $F$, $\mathrm{mex}(m,K_t,F)=\Omega(m^{\frac{t}{2}})$ if and only if $\chi(F)>t$. 
\end{cor}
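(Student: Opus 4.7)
The plan is to establish the two directions separately, leveraging the machinery from Section 3.

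For the $(\Leftarrow)$ direction, suppose $\chi(F) > t$. I would exhibit an explicit $F$-free graph on $m$ edges with $\Omega(m^{t/2})$ copies of $K_t$. The natural candidate is the Tur\'an graph $T(n, t)$, i.e.\ the balanced complete $t$-partite graph on $n$ vertices, which is $t$-chromatic and therefore $F$-free. Choosing $n = \Theta(\sqrt{m})$ yields $e(T(n,t)) = \Theta(n^2) = \Theta(m)$ edges and $\prod_i n_i = \Theta(n^t) = \Theta(m^{t/2})$ copies of $K_t$. To pin the edge count at exactly $m$, I would pick $n$ maximal with $e(T(n,t)) \le m$, so that the deficit satisfies $m - e(T(n,t)) = O(n) = O(\sqrt{m})$, and absorb this deficit by taking the disjoint union with a matching on fresh vertices. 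This union preserves $F$-freeness (its chromatic number remains $\le t$, using $t \ge 2$) and has the same number of $K_t$'s as $T(n,t)$, which is $\Omega(m^{t/2})$ as desired.

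For the $(\Rightarrow)$ direction, I would argue by contrapositive. Assume $\chi(F) \le t$. By Lemma~\ref{Alon_Kr_O(n^r)_iff_chi(F)>r} (the Alon--Shikhelman theorem), $\mathrm{ex}(n, K_t, F) = o(n^t)$. Applying Corollary~\ref{ex(n,kr,F)_indicates_mex(m,Kr,F)} with $r = s = t$ then gives
$$\mathrm{mex}(m, K_t, F) \;=\; o\!\left(m^{\frac{(t-1)t}{t + t - 2}}\right) \;=\; o(m^{t/2}),$$
which contradicts the assumption $\mathrm{mex}(m, K_t, F) = \Omega(m^{t/2})$.

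The only mildly technical step is the matching trick that makes $e(T(n,t))$ agree with $m$ exactly; since $O(\sqrt{m})$ additional edges are placed on fresh vertices, neither $F$-freeness nor the $\Theta(m^{t/2})$ clique count is compromised. Beyond that, there is no real obstacle: the statement is a clean synthesis of the chromatic criterion for the vertex-parametrized generalized Tur\'an number (Lemma~\ref{Alon_Kr_O(n^r)_iff_chi(F)>r}) with the edge-to-vertex reduction (Corollary~\ref{ex(n,kr,F)_indicates_mex(m,Kr,F)}) already established earlier in this section.
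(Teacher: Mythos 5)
Your proof is correct and follows essentially the same route as the paper, which obtains this corollary by combining Lemma~\ref{Alon_Kr_O(n^r)_iff_chi(F)>r} with Corollary~\ref{ex(n,kr,F)_indicates_mex(m,Kr,F)} (applied with $s=r=t$, giving the exponent $\frac{(t-1)t}{2t-2}=\frac t2$) exactly as you do for the forward direction. Your explicit Tur\'an-graph-plus-matching construction for the $(\Leftarrow)$ direction simply fills in details the paper leaves implicit, and your chromatic-number argument correctly guarantees $F$-freeness of the padded graph.
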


Now we sketch the proof of Theorem~\ref{Upperbound_of_Kr_in_some_F_free}. First, assume for contradiction that a minimal counterexample exists. Then we show that every vertex in this counterexample has large degree, which leads to a contradiction to the bound on $\text{ex}(n, F)$.

\begin{proof}[Proof of Theorem \ref{Upperbound_of_Kr_in_some_F_free}]
    Define $$C'=\max\left\{\frac{\mathcal{N}\left(K_{r+1},G\right)}{e(G)^f}:G\text{ is a non-empty graph with }v(G)\leq v(F)-1\right\},$$
    where $f=f(\alpha,\beta)$. 
    Let $C$ be a constant  satisfying $C>\max(C',C_1^{\frac{\beta - 1}{\alpha}}C_2)$ which depends only on $C_1,C_2,\alpha,\beta$ and $F$.  Now we prove $\mathrm{mex}(m,K_{r+1},F)\leq Cm^f$ by contradiction. Suppose that $n$ is the minimum positive integer such that there exists a $F$-free graph $G_0$ of order $n$ satisfying $\mathcal{N}(K_{r+1},G_0)>Cm^f$ where $m = e(G_0)$. By the definition of $C$,  we have $n\geq v(F)$.
  Note that for any $v \in V(G_0)$, the number of $K_{r+1}$'s that contains $v$ in $G_0$ is equal to the number of $K_{r}$'s in $G_0[N_v]$. 
  Let $v_0 \in V(F)$ be a vertex that attains the minimum of $\min_{v_0 \in F} \ex(n,K_r,F-v_0)$.
  Since $G_0$ is $F$-free, $G_0[N_v]$ is $(F-v_0)$-free, which implies $\mathcal{N}_v(K_{r+1},G_0)\le C_2d_{G_0}(v)^\beta$. Therefore, we have   
    \begin{align}
        C(m-d_{G_0}(v))^f\ge k_{r+1}(G_0-v) = k_{r+1}(G_0)-k_{r}(G_0[N_{G_0}(v)]) > Cm^f-C_2d_{G_0}(v)^\beta.
    \end{align}
    Hence,
    \begin{align}
        C_2d_{G_0}(v)^\beta\ge Cm^f\left(1-\left(1-\frac{d_{G_0}(v)}{m}\right)^f\right)\ge Cm^{f-1}d_{G_0}(v).
    \end{align}
    Since $\beta>1$, we obtain for each $v\in V(G),$ $d_{G_0}(v)\geq\left(\frac{C}{C_2}m^{f-1}\right)^\frac{1}{\beta-1}$. Thus, $$2m=\sum_{v\in V\left(G_0\right)}d_{G_0}\left(v\right)\geq n\cdot \left(\frac{C}{C_2}m^{f-1} \right)^\frac{1}{\beta-1},$$ which implies that $$n\leq 2\left(\frac{C_2}{C}\right)^{\frac{1}{\beta - 1}}m^{\frac 1 \alpha}<C_1^{-\frac{1}{\alpha}}m^{\frac 1 \alpha}.$$  This contradicts to the fact that $m\le C_1n^\alpha$ since $G_0$ is $F$-free.
\end{proof}

We are ready to prove a lower bound of $ex_u(t,K_r,F)$ by the probabilistic method.
\begin{proof}[Proof of Theorem \ref{Lowerbound_of_Kr_in_some_F_free}]
It is clear that there exists a positive constant $C'$ such that $\frac{(C')^{\binom{r}{2}}}{2r!}-\frac 3 2(r-2)! \left(2C'\right)^{e(F)}>0$. Let $C_0=2\frac{(2C')^{\binom{u}{2}}}{u!}$, $C_u=\frac{C'}{C_0}$, and $n$ be an integer that $$C_0 n^{u-\frac{v(F)-2}{e(F)-\binom {r} {2}}\binom {u} {2}}<t\le 2 C_0n^{u-\frac{v(F)-2}{e(F)-\binom {r} {2}}\binom {u} {2}}.$$   

Let $G$ be the Erd\H{o}s-R\'enyi random graph $G(n,p)$ with
$$
p=C_u\frac {t} {{n}^{u-\frac{v(F)-2}{e(F)-\binom r 2}\left(\binom u 2-1\right)}}=\Theta\left(n^{-\frac {v(F)-2} {e(F)-\binom r 2}}\right).
$$
Hence, we have
$$
\begin{aligned}
E(k_u(G))&=\binom n u \left(\frac {C_ut} {n^{u-\frac{v(F)-2}{e(F)-\binom r 2}\left(\binom u 2-1\right)}}\right)^{\binom{u}{2}}\\
&<\frac{n^u}{u!}C_u^{\binom{u}{2}}\left(\frac{2C_0 n^{u-\frac{v(F)-2}{e(F)-\binom r 2}\binom u 2}}{n^{u-\frac{v(F)-2}{e(F)-\binom r 2}\left(\binom u 2-1\right)}}\right)^{\binom{u}{2}}\\
&=\frac{(2C_0C_u)^{\binom{u}{2}}}{u!}n^{u-\frac{v(F)-2}{e(F)-\binom r 2}\binom u 2}\\&\le\frac t 2.
\end{aligned}
$$
By Chebyshev inequality, $$P(k_u(G)\le t)\ge P\left(|k_u(G)-E(k_u(G))|\le E(k_u(G))\right)\ge1-\frac{\operatorname{Var}(k_u(G))}{E(k_u(G))^2}=1-O\left(\frac 1 {\Phi(K_u)}\right),$$ where 
$$
\begin{aligned}
\Phi_{K_u}&=\Theta\left(\min\limits_{H_0\subseteq K_u,e_{H_0}>0}n^{v(H_0)}p^{e(H_0)}\right)=\Theta\left(\min\limits_{v(H_0)\le u< r}n^{v(H_0)}\left(n^{-\frac {v(F)-2} {e(F)-\binom r 2}}\ \right)^{\binom{v(H_0)}{2}}\right)\\
&\ge\Theta\left(\min\limits_{v(H_0)\le r}\left(n^{v(H_0)}\right)^{1- \frac {\frac{V(H_0)-1}2(v(F)-2)} {e(F)-\binom r 2}}\right)\ge
\Theta\left(\min\limits_{v(H_0)\le r}\left(n^{v(H_0)}\right)^{1- \frac {\frac{r-1}2v(F)-(r-1)} {e(F)-\binom r 2}}\right).
\end{aligned}
$$
Since $e(F)-\binom r 2>\frac{r-1}{2}v(F)-(r-1)$, as $n\rightarrow \infty$ (with $t\rightarrow \infty$), the inequality $k_u(G)<t$ holds with high probability. Similarly, $E(k_r(G))=\binom n r \left(\frac {C_u t} {n^{u-\frac{v(F)-2}{e(F)-\binom r 2}\left(\binom u 2-1\right)}}\right)^{\binom{r}{2}}$
and $ P\left(|k_r(G)-E(k_r(G))|> {E(k_r(G))}/{2}\right)\le O(\frac{1}{\Phi(K_r)})=o(1)$.
Let $X$ denote the number of $F$-copies in $G$. Then our choice of $p$ ensures that 
\begin{align}\label{E_X}
    E(X)\leq p^{e(F)}n^{v(F)}
\le \left(2C_0C_un^{-\frac {v(F)-2} {e(F)-\binom r 2}}\right)^{e(F)}n^{v(F)}
\le \left(2C_0C_u\right)^{e(F)}n^{\frac{2e(F)-\binom r 2 v(F)}{e(F)-\binom r 2}}
\end{align}
By Chebyshev inequality and Lemma \ref{Upperbound_var_H},    
$$
\begin{aligned}
        P\left(|X-E(X)|>\frac {E(X)}{2}\right)\le4\frac {\operatorname{Var}(X)}{E(X^2)}=O\left(\frac 1{\Phi_{F}}\right),
\end{aligned}
$$
where $\Phi_{F}=\Theta\left(\min\limits_{F_0\subseteq F,e_{F_0}>0}n^{v(F_0)}p^{e(F_0)}\right)=\Theta\left(\min\limits_{F_0\subseteq F,e_{F_0}>0}n^{e(F_0)\left(\frac{v(F_0)}{e(F_0)}-\frac{v(F)-2}{e(F)-\binom{r}{2}}\right)}\right)$. Therefore, since $\max\limits_{F_0\subseteq F,e(F_0)>0}\frac{2e(F_0)}{v(F_0)}<\frac{2e(F)-r(r-1)}{v(F)-2}$, that is $\min\limits_{F_0\subseteq F,e_{F_0}>0}\frac{v(F_0)}{e(F_0)}>\frac{v(F)-2}{e(F)-\binom{r}{2}}$, $O\left(\frac 1{\Phi_{F}}\right)=o(1)$, the inequality $X<\frac{3E(X)}{2}$ holds with high probability as $n\rightarrow \infty$.

On the other hand, deleting an edge destroys at most $\binom {n-2} {r-2}$ $K_r$'s. Note that  
\begin{align}\label{E_K_r}
    E(k_r(G))=\binom n r \left(\frac {C_ut} {n^{u-\frac{v(F)-2}{e(F)-\binom r 2}\left(\binom u 2-1\right)}}\right)^{\binom{r}{2}}>\frac{(C_0C_u)^{\binom{r}{2}}}{r!}
n^{r-\frac{v(F)-2}{e(F)-\binom r 2}\binom r 2},
\end{align}

 and $P(k_r(G)>E(k_r(G))/2)=1-o(1)$.
 Hence by (\ref{E_X}) and (\ref{E_K_r})
$$
\begin{aligned}
        &P\left(k_r(G)-\binom {n-2} {r-2}X>\left(\frac{(C_0C_u)^{\binom{r}{2}}}{2r!}-\frac 3 2(r-2)! \left(2C_0C_u\right)^{e(F)}\right)n^{r-\frac{v(F)-2}{e(F)-\binom r 2}\binom r 2} \right)\\
        \ge&P\left(\left(k_r(G)>\frac{(C_0C_u)^{\binom{r}{2}}}{2r!}n^{r-\frac{v(F)-2}{e(F)-\binom r 2}\binom r 2}\right)\bigcap \left(X<\frac 3 2 \left(2C_0C_u\right)^{e(F)}n^{\frac{2e(F)-\binom r 2 v(F)}{e(F)-\binom r 2}} \right)\right)
        \\
        \ge&P\left(k_r(G)>\frac{E(k_r(G))}{2}\right)+P\left(X<\frac 3 2E(X)\right)-1
        \\
        =&1-o(1).
    \end{aligned}
$$
So with high probability, there exists a $F$-free graph $G'$ with $$k_r(G')>\varepsilon n^{r-\frac{v(F)-2}{e(F)-\binom r 2}\binom r 2}=\Theta(t^{u-\frac{v(F)-2}{e(F)-\binom r 2}\binom u 2}),$$ and $$k_u(G')<k_u(G)<2E(k_u(G))=t.$$ Therefore, for $F$-free graph $G$ with $k_u(G)=t$, $$k_r(G)= \Omega\left(t^{\frac{re(F)-\binom r 2 v(F)-r\binom r 2+2\binom r 2}{ue(F)-\binom u 2 v(F)-u\binom r 2+2\binom u 2}}\right)$$ holds with high probability, which proves the theorem.		
\end{proof}

\section{Applications}
 Applying the theorems in the previous section, we can obtain upper and lower bounds of $\mathrm{mex}(m,K_r,F)$ for various families of graphs.
\subsection{$K_{s,t}$-free graphs}

In this subsection, we prove Corollary \ref{UpperBound_K_r_in_K_st-free_Fixed_Edge}. 
The following lemma establishes an upper bound on the maximum number of $K_r$'s in a $K_{s,t}$-free graph with $n$ vertices.
\begin{lem}\label{Upper_n_Kr}~\cite{Alon_Many T copies in H-free graphs}
    For any fixed $r \geq 2$ and $t \geq s \geq r-1$,
$$
\mathrm{ex}\left(n, K_r, K_{s, t}\right) \leq\left(\frac{1}{r!}+o(1)\right)(t-1)^{\frac{r(r-1)}{2 s}} n^{r-\frac{r(r-1)}{2 s}}.
$$

\end{lem}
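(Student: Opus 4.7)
The plan is to prove Lemma~\ref{Upper_n_Kr} by induction on $r$, using the K\H{o}v\'ari--S\'os--Tur\'an theorem as the base case and a H\"older-style degree-moment argument in the inductive step.

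The base case $r=2$ is exactly the classical K\H{o}v\'ari--S\'os--Tur\'an bound $\mathrm{ex}(n,K_{s,t})\le(\tfrac{1}{2}+o(1))(t-1)^{1/s}n^{2-1/s}$. For the inductive step, fix $s\ge r$ and $t\ge s$, let $G$ be a $K_{s,t}$-free graph on $n$ vertices, and rely on two observations. First, for each $v\in V(G)$, the subgraph $G[N_G(v)]$ is $K_{s-1,t}$-free (otherwise adjoining $v$ yields a $K_{s,t}$ in $G$), so the induction hypothesis applied to the pair $(r,s-1)$ (admissible since $s-1\ge r-1$) gives
$$k_r(G[N_G(v)])\le\left(\tfrac{1}{r!}+o(1)\right)(t-1)^{r(r-1)/(2(s-1))}\,d_G(v)^{\alpha},$$
where $\alpha:=r-\frac{r(r-1)}{2(s-1)}$. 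Second, $K_{s,t}$-freeness forces $|N_G(A)|\le t-1$ for every $s$-subset $A\subseteq V(G)$; double-counting pairs $(A,v)$ with $v\in N_G(A)$ yields
$$\sum_{v}\binom{d_G(v)}{s}=\sum_{|A|=s}|N_G(A)|\le (t-1)\binom{n}{s},$$
so $\sum_v d_G(v)^s\le(t-1+o(1))n^s$.

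Combining the two via $(r+1)k_{r+1}(G)=\sum_v k_r(G[N_G(v)])$ and H\"older's inequality in the form $\sum_v d_G(v)^\alpha\le\bigl(\sum_v d_G(v)^s\bigr)^{\alpha/s}n^{1-\alpha/s}$ (valid since $\alpha\le s$ whenever $s\ge r$) then yields the desired bound at level $r+1$, provided one verifies the two identities
$$\alpha+1-\frac{\alpha}{s}=(r+1)-\frac{(r+1)r}{2s}\qquad\text{and}\qquad\frac{r(r-1)}{2(s-1)}+\frac{\alpha}{s}=\frac{r(r+1)}{2s},$$
both of which follow from the definition of $\alpha$ by a short calculation.

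The main obstacle is the double bookkeeping of the $(t-1)$-exponent and the $n$-exponent through the recursion: the whole argument is engineered so that the two identities above hold, and they are the only non-routine algebraic check. A minor technical point is that the power-mean inequality requires $\alpha\le s$, which reduces to $s\ge r$ and thus holds throughout the induction; the extraction of the leading constant $1/r!$ from the prefactor $(1/r! + o(1))$ is likewise automatic, since the degree moment bound $(t-1+o(1))n^s$ already carries an asymptotically sharp constant inherited from $\binom{n}{s}\sim n^s/s!$.
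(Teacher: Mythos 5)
This lemma is quoted from Alon--Shikhelman and the paper gives no proof of it; your induction-on-$r$ argument (Kővári--Sós--Turán base case, $K_{s-1,t}$-freeness of neighborhoods, the moment bound $\sum_v\binom{d_G(v)}{s}\le (t-1)\binom{n}{s}$, and power-mean interpolation) is essentially the original proof from the cited source, and the exponent bookkeeping you check is correct. The only point to tidy is uniformity of the error term: the $o(1)$ in the induction hypothesis is relative to $d_G(v)$, not $n$, so vertices of bounded degree must be handled separately; their total contribution to $\sum_v k_r(G[N_G(v)])$ is $O(n)$, which is negligible against $n^{\,r+1-\binom{r+1}{2}/s}$, and similarly passing from $\sum_v\binom{d_G(v)}{s}$ to $\sum_v d_G(v)^s\le (t-1+o(1))n^s$ needs a one-line absorption of lower-order terms.
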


The projective norm-graphs $H(q, s)$ constructed in~\cite{Alon_Norm_graph} are known to be $K_{s,(s-1)!+1}$-free, making them a natural candidate for studying the behavior of $K_r$'s in graphs that avoid certain complete bipartite subgraphs. The next lemma shows that the number of $K_r$'s in $H(q, s)$ matches the upper bound established in Lemma \ref{Upper_n_Kr} up to a constant factor, demonstrating that the upper bound is asymptotically tight for $s\geq 2r-2$ and $t\geq (s-1)!+1$.

\begin{lem}\label{Lower_n_Kr}~\cite{Alon_Many T copies in H-free graphs}
    For any fixed $r, s \geq 2 r-2$ and $t \geq(s-1)!+1$, let $H=H(q,s)$ be a projective norm-graph and let $n=v(H)$,
$$
\mathcal{N} (K_r, H(q,s)) =\left(\frac{1}{r!}+o(1)\right) n^{r-\frac{r(r-1)}{2 s}}.
$$
In particular, for any fixed $s \geq 2$ and $t \geq(s-1)!+1$, $$\mathcal{N}\left(K_3, H(q,s)\right)=\Theta\left(n^{3-3 / s}\right).
$$
\end{lem}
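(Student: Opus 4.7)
The projective norm graph $H(q, s)$ of Koll\'ar-R\'onyai-Szab\'o and Alon-R\'onyai-Szab\'o is an explicit algebraic construction with $n := v(H) = \Theta(q^s)$ vertices that is roughly $q^{s-1}$-regular, so its edge density is $p \sim q^{-1} \sim n^{-1/s}$. The random-graph heuristic $\binom{n}{r} p^{\binom{r}{2}}$ then predicts exactly $(1/r! + o(1))\, n^{r - r(r-1)/(2s)}$ copies of $K_r$, matching the desired asymptotic, so the plan is to prove the upper and lower bounds separately.

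For the upper bound, the classical theorem of Koll\'ar-R\'onyai-Szab\'o (refined by Alon-R\'onyai-Szab\'o) asserts that $H(q,s)$ is $K_{s,(s-1)!+1}$-free. Since $t \geq (s-1)!+1$, the graph $H(q,s)$ is in particular $K_{s,t}$-free, and applying Lemma \ref{Upper_n_Kr} to it as host immediately gives $\mathcal{N}(K_r, H(q,s)) \le (1/r! + o(1))\, n^{r - r(r-1)/(2s)}$.

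For the matching lower bound I plan a direct algebraic count of ordered $r$-cliques in $H(q,s)$. Recall that vertices are equivalence classes of pairs $(A, a) \in \mathbb{F}_{q^{s-1}}^* \times \mathbb{F}_q$, with $(A,a), (B,b)$ adjacent iff $N(AB) = a + b$ for $N\colon \mathbb{F}_{q^{s-1}}^* \to \mathbb{F}_q^*$ the field norm. Writing $\alpha_i := N(A_i)$, multiplicativity of the norm reduces the $\binom{r}{2}$ adjacency equations on a tuple $(A_i, a_i)_{i=1}^r$ to the ``base'' polynomial system $\alpha_i \alpha_j = a_i + a_j$ for $1 \le i < j \le r$, which depends only on the projections $(\alpha_i, a_i) \in \mathbb{F}_q^* \times \mathbb{F}_q$. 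Since each fiber $N^{-1}(\alpha)$ has size $(q^{s-1}-1)/(q-1) \sim q^{s-2}$, ordered clique tuples are in bijection (modulo projective equivalence) with pairs consisting of a profile $(\alpha_i, a_i)_{i=1}^r$ solving the base system and a choice of lift $A_i \in N^{-1}(\alpha_i)$ for each $i$ with distinctness enforced. A casework on the pattern of repetitions among the $\alpha_i$'s shows that the contribution of each pattern is of controlled order; the hypothesis $s \geq 2r-2$ ensures that the fibers are large enough for the dominant contribution to match $(1/r! + o(1))\, n^{r - r(r-1)/(2s)}$, with subdominant patterns absorbed into the $o(1)$.

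The main obstacle is the careful bookkeeping of different $\alpha$-patterns, of the equivalence identifications intrinsic to the projective construction, and of the requirement that the lifted $A_i$'s produce distinct vertices, together with verification that the generic pattern realizes the leading constant $1/r!$ exactly. For $r = 3$ only a handful of patterns arise (all $\alpha_i$ distinct, exactly two equal, all equal), and a direct count of each shows that the generic case dominates at $\Theta(n^{3-3/s})$ for every $s \geq 2$, giving the second assertion. A more conceptual alternative is to invoke the pseudorandomness of $H(q,s)$ as an $(n, d, \lambda)$-graph with $d = \Theta(q^{s-1})$ and $\lambda = O(q^{(s-1)/2})$ (from known spectral estimates for norm graphs), after which a standard clique-count argument for $(n,d,\lambda)$-graphs yields the claimed asymptotic provided the spectral gap is sufficiently comfortable; the condition $s \geq 2r-2$ again ensures this for fixed $r$.
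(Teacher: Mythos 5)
First, a point of reference: the paper does not prove this lemma at all --- it is quoted directly from Alon and Shikhelman --- so your proposal can only be measured against the argument in that cited source and against correctness. Two steps of your plan have genuine problems. The upper bound via Lemma~\ref{Upper_n_Kr} does not ``immediately give'' the stated bound: that lemma carries an extra factor $(t-1)^{\frac{r(r-1)}{2s}}>1$, so it proves the right order of magnitude but not the asymptotic equality with leading constant $\frac{1}{r!}$ that the lemma asserts. More seriously, your primary lower-bound argument rests on a misstated adjacency rule. In the projective norm graph the vertex set is $\mathbb{F}_{q^{s-1}}\times\mathbb{F}_q^{*}$ and $(A,a)\sim(B,b)$ iff $N(A+B)=ab$; the norm of a \emph{sum} is not determined by $N(A)$ and $N(B)$, so the proposed reduction ``by multiplicativity'' to a base system $\alpha_i\alpha_j=a_i+a_j$, together with the fiber-lifting bijection and the casework on repetition patterns of the $\alpha_i$, does not exist for the actual graph.

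The route that does work --- and the one Alon and Shikhelman actually take --- is the one you relegate to a ``conceptual alternative'': $H(q,s)$ is an $(n,d,\lambda)$-graph with $d=(1+o(1))n^{1-1/s}$ and $\lambda=(1+o(1))\sqrt{d}$, and the standard clique-counting theorem for pseudorandom graphs gives $(1+o(1))\binom{n}{r}(d/n)^{\binom{r}{2}}$ copies of $K_r$ (both bounds, with the constant $\frac{1}{r!}$) provided $\lambda=o\bigl(d^{r-1}/n^{r-2}\bigr)$. A one-line computation shows this eigenvalue condition amounts to $s>2r-3$, i.e.\ exactly the hypothesis $s\ge 2r-2$; so that hypothesis is not about ``fiber sizes'' but about the spectral gap, as you suspect at the very end. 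For the triangle statement with $s=2,3$ the eigenvalue condition fails and one instead counts directly via codegrees, $k_3(H)=\frac{1}{3}\sum_{uv\in E}|N(u)\cap N(v)|$ with $|N(u)\cap N(v)|=\Theta(q^{s-2})$ for all but a negligible set of pairs, yielding $\Theta(n^{3-3/s})$. In short, you have correctly identified the working tool, but the approach you put your weight on collapses under the true definition of the graph, and the upper-bound step as written does not recover the claimed constant.
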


Now, we can apply Theorem \ref{Upperbound_of_Kr_in_some_F_free} to get $\mathrm{mex}(m,K_r,K_{s,t})$.

\begin{proof}[Proof of Corollary \ref{UpperBound_K_r_in_K_st-free_Fixed_Edge}]
    By the classical K\"{o}vari-S\'{o}s-Tur\'{a}n Theorem, we have $\mathrm{ex}(n,K_{s,t})=O(n^{2-1/s})$.
    By Lemma \ref{Upper_n_Kr}, for $r\geq 3$ and $t\geq s\geq r$, $$\mathrm{ex}(n,K_{r-1},K_{s-1,t})=O\left(n^{r-1-\frac{(r-1)(r-2)}{2(s-1)}}\right).$$ Hence, by Theorem \ref{Upperbound_of_Kr_in_some_F_free}, $$ \mathrm{mex}(m,K_r,K_{s,t})=O\left(m^\frac{rs-\binom{r}{2}}{2s-1}\right).$$ In particular, when $r\geq 4$, $s\geq 2r-2$ and $t\geq (s-1)!+1$, or $r=3$ and $t\geq (s-1)!+1\geq 2$, the existence of the projective norm-graphs shows that 
$ \mathrm{mex}(m,K_r,K_{s,t})=\Theta\left(m^\frac{rs-\binom{r}{2}}{2s-1}\right)$.
\end{proof}

Note that when $r=3$, we obtain that $\text{mex}(m,K_3,K_{s,t})=\Theta(m^{\frac{3s-3}{2s-1}})$ for $s\geq 2$ and $t\geq (s-1)!+1$, and this conclusion can be derived by Theorem \ref{Upperbound_of_Kr_in_some_F_free} and K\"{o}vari-S\'{o}s-Tur\'{a}n Theorem (without applying Lemma~\ref{Upper_n_Kr}).

Moreover, for the general case, we can get a lower bound by Theorem~\ref{Lowerbound_of_Kr_in_some_F_free}.
\begin{thm}\label{Lower_K_r_in_K_{s,t}-free}
    Let $r>u\ge2$ and $t\ge s\ge \max\left(\binom{r}{2},2r-2\right)$. Then there exists $C_{u,r}$ such that
    $$
    ex_u(p,K_r,K_{s,t})\ge C_{u,r}p^{\frac{2rst-r(r-1)(s+t)-r(r-1)(r-2)}{2ust-u(u-1)(s+t)-ur(r-1)+2u(u-1)}}.
    $$
    holds for sufficiently large $p$.
\end{thm}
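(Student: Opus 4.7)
The plan is to apply Theorem~\ref{Lowerbound_of_Kr_in_some_F_free} directly with $F = K_{s,t}$, so the whole argument reduces to verifying that its three hypotheses translate correctly in this bipartite setting and then simplifying the resulting exponent. First, I would record that $v(F) = s+t$ and $e(F) = st$. The inequality $v(F) > 2$ is immediate, and the inequality $e(F) > \tfrac{r-1}{2}v(F) + \binom{r}{2} - (r-1)$ becomes the polynomial bound $st > \tfrac{r-1}{2}(s+t) + \tfrac{(r-1)(r-2)}{2}$, which is comfortably satisfied under our hypothesis $t \geq s \geq \max(\binom{r}{2}, 2r-2)$.

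The substantive check is the density condition $\max_{F_0 \subseteq F, e(F_0)>0} \tfrac{2e(F_0)}{v(F_0)} < \tfrac{2e(F) - r(r-1)}{v(F) - 2}$. This is where Lemma~\ref{Min_Phi_KPI_2} does the real work: it tells us the maximum on the left is attained by $F = K_{s,t}$ itself and equals $\tfrac{2st}{s+t}$. Cross-multiplying the resulting inequality $\tfrac{2st}{s+t} < \tfrac{2st - r(r-1)}{s+t-2}$ and canceling common terms collapses it to the clean form $4st > r(r-1)(s+t)$, equivalently $\tfrac{1}{s} + \tfrac{1}{t} < \tfrac{2}{\binom{r}{2}}$. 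Under $t \geq s \geq \binom{r}{2}$ we have $\tfrac{1}{s} + \tfrac{1}{t} \leq \tfrac{2}{s} \leq \tfrac{2}{\binom{r}{2}}$, and the hypothesis $s \geq 2r-2$ together with $t \geq s$ pushes this to strict inequality in the non-degenerate regime.

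Once the hypotheses of Theorem~\ref{Lowerbound_of_Kr_in_some_F_free} are confirmed, the conclusion is obtained by substituting $v(F) = s+t$, $e(F) = st$ into the exponent
$$\frac{r\,e(F) - \binom{r}{2}v(F) - r\binom{r}{2} + 2\binom{r}{2}}{u\,e(F) - \binom{u}{2}v(F) - u\binom{r}{2} + 2\binom{u}{2}},$$
and clearing binomial coefficients (multiplying numerator and denominator by $2$ and expanding $\binom{r}{2} = \tfrac{r(r-1)}{2}$, $\binom{u}{2} = \tfrac{u(u-1)}{2}$) produces exactly
$$\frac{2rst - r(r-1)(s+t) - r(r-1)(r-2)}{2ust - u(u-1)(s+t) - ur(r-1) + 2u(u-1)},$$
which matches the statement. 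The resulting constant depends only on $K_{s,t}$, $u$, and $r$, so it qualifies as $C_{u,r}$.

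The only real obstacle is the density check, and the content is Lemma~\ref{Min_Phi_KPI_2}; everything else is routine algebraic simplification and verification of polynomial inequalities. One minor care-point is the boundary case $s = t = \binom{r}{2}$ (which can occur for $r \geq 4$), where the density inequality becomes an equality rather than a strict one; in such a borderline case one can either strengthen the hypothesis to strict inequality in $s$ or $t$, or absorb the boundary into the constant, but the asymptotic lower bound itself is unaffected.
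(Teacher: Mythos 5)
Your proposal is correct and follows essentially the same route as the paper: apply Theorem~\ref{Lowerbound_of_Kr_in_some_F_free} to $F=K_{s,t}$, verify the two polynomial hypotheses (the density condition collapsing, exactly as you say, to $4st>r(r-1)(s+t)$, i.e.\ $\tfrac{2st}{s+t}>\binom{r}{2}$), and substitute $v(F)=s+t$, $e(F)=st$ into the exponent of that theorem. Your caveat about the boundary case $s=t=\binom{r}{2}$ (possible when $r\ge 4$) is apt, since the verification only gives $\tfrac{2st}{s+t}\ge\binom{r}{2}$ there and strictness requires $t>s$ or $s>\binom{r}{2}$ (automatic for $r=3$); the paper's own proof passes over this point, so your flagging it is, if anything, more careful than the original.
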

\begin{proof}
Notice that when $t\geq s\ge2(r-2)$ and $r\geq 3$,
$$
\begin{aligned}
    e(K_{s,t})-\frac{r-1}{2}v(K_{s,t})-\binom r 2+(r-1)&\geq 
\left(s-\frac{r-1}{2}\right)\left(t-\frac{r-1}{2}\right)-\frac{(r-1)(3r-5)}{4}\\ &\geq \frac{1}{4}(6r^2-10r-4)>0, 
\end{aligned}
$$
and since $\frac{2st}{s+t}\ge \min(s,t)\ge \binom{r}{2}$,
$$
\max\limits_{F\subseteq K_{s,t},e_{K_{s,t}}>0}\frac{2e(F)}{v(F)}\le\max_{s_0\le s,t_0\le t}\frac{2s_0t_0}{s_0+t_0}\le \frac{2st}{s+t}<\frac{2st-r(r-1)}{s+t-2}.
$$
Thus, the theorem follows by Theorem~\ref{Lowerbound_of_Kr_in_some_F_free}.
\end{proof}

Set $p=2$ and $s=t\geq 4$, we have the following  corollary by Theorems~\ref{UpperBound_K_r_in_K_st-free_Fixed_Edge} and \ref{Lower_K_r_in_K_{s,t}-free} directly.
\begin{cor}\label{Cor_mex(m,K3,Kst)}
    Let $s\geq 4$ be an integer. Then $$\text{mex}(m,K_3,K_{s,s}) = \Omega(m^{\frac{3s-6}{2s-2}}) \text{ and mex}(m,K_3,K_{s,s})=O(m^{\frac{3s-3}{2s-1}}).$$
\end{cor}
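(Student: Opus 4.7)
The plan is to derive both bounds by direct substitution into earlier results, specifically Corollary~\ref{UpperBound_K_r_in_K_st-free_Fixed_Edge} for the upper bound and Theorem~\ref{Lower_K_r_in_K_{s,t}-free} for the lower bound, taking $r=3$, $t=s$, and (for the lower bound) $u=2$. The hypothesis $s \geq 4$ is exactly what is required to invoke both results: for $r=3$ we have $\max(\binom{r}{2},\,2r-2) = 4$ in Theorem~\ref{Lower_K_r_in_K_{s,t}-free}, and the conditions of Corollary~\ref{UpperBound_K_r_in_K_st-free_Fixed_Edge} ($r \geq 3$ and $t \geq s \geq r$) are then automatic.

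For the upper bound, plugging $r = 3$ and $t = s$ into the exponent $(rs - \binom{r}{2})/(2s-1)$ from Corollary~\ref{UpperBound_K_r_in_K_st-free_Fixed_Edge} immediately yields $\mathrm{mex}(m,K_3,K_{s,s}) = O(m^{(3s-3)/(2s-1)})$. For the lower bound, I would invoke Theorem~\ref{Lower_K_r_in_K_{s,t}-free} with $u = 2$, $r = 3$, $t = s$; since $\mathrm{ex}_2(m,K_3,F)$ coincides with $\mathrm{mex}(m,K_3,F)$ by definition, this delivers
$$\mathrm{mex}(m, K_3, K_{s,s}) \;\geq\; C_{2,3}\, m^{(6s^2 - 12s - 6)/(4s^2 - 4s - 8)}$$
for all sufficiently large $m$.

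The only substantive remaining task is to reconcile the unwieldy exponent $(6s^2 - 12s - 6)/(4s^2 - 4s - 8)$ produced by Theorem~\ref{Lower_K_r_in_K_{s,t}-free} with the cleaner target $(3s-6)/(2s-2) = 3(s-2)/(2(s-1))$ stated in the corollary. I would show that the first dominates the second via cross-multiplication (both denominators are positive for $s \geq 4$): the inequality
$$\frac{6s^2 - 12s - 6}{4s^2 - 4s - 8} \;\geq\; \frac{3s - 6}{2s - 2}$$
is equivalent, after expanding both sides, to $12(s - 3) \geq 0$, which holds for every $s \geq 3$ and in particular for $s \geq 4$. The desired lower bound then follows from the elementary fact that $\Omega(m^{a}) \subseteq \Omega(m^{b})$ whenever $a \geq b$. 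I expect this exponent comparison to be the most error-prone step; everything else is mechanical substitution into already-established theorems.
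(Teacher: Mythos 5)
Your proposal is correct and takes essentially the same route as the paper, which obtains the corollary directly from Corollary~\ref{UpperBound_K_r_in_K_st-free_Fixed_Edge} (with $r=3$, $t=s$) and Theorem~\ref{Lower_K_r_in_K_{s,t}-free} (with $u=2$, $r=3$, $t=s$; the paper's ``set $p=2$'' is evidently meant as $u=2$). Your explicit cross-multiplication showing $\frac{6s^2-12s-6}{4s^2-4s-8}\ge\frac{3s-6}{2s-2}$ for $s\ge 3$ correctly supplies the exponent comparison that the paper leaves implicit in its ``directly.''
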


\subsection{$K_{s_1,\ldots,s_r}$-free graph}

Recently, Balogh, Jiang and Luo~\cite{complete_r_partite_free} derived an upper bound of $\mathrm{ex}(n,K_{r},K_{s_1,\cdots,s_r})$ for $r\geq 3$.
\begin{lem}~\cite{complete_r_partite_free}\label{complete_r_partite_free}
     For every positive integer $r \geqslant 3$ and positive integers $s_1 \leqslant s_2 \leqslant \ldots \leqslant s_r$, $ex\left(n, K_r, K_{s_1, s_2, \ldots, s_r}\right)=o\left(n^{r-1 / \prod_{i=1}^{r-1} s_i}\right)$.

\end{lem}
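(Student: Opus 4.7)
My plan is to reduce the problem to Erd\H{o}s' classical Zarankiewicz-type bound for complete $r$-partite $r$-uniform hypergraphs, using a random colouring as the bridge between graphs and hypergraphs.

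First I would randomly partition $V(G)$ into $r$ colour classes $V_1,\ldots,V_r$ by placing each vertex independently and uniformly. A fixed copy of $K_r$ in $G$ becomes \emph{rainbow} (one vertex per colour class) with probability $r!/r^r$, so in expectation the number of rainbow $K_r$-copies is $(r!/r^r)\,k_r(G)$. Fixing a partition that attains at least the expectation, I would form the $r$-partite $r$-uniform hypergraph $H$ on parts $V_1,\ldots,V_r$ whose hyperedges are precisely the rainbow $K_r$-copies of $G$, so that $|E(H)|\ge (r!/r^r)\,k_r(G)$.

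The key structural observation is that $H$ is $K^{(r)}_{s_1,\ldots,s_r}$-free, where $K^{(r)}_{s_1,\ldots,s_r}$ denotes the complete $r$-partite $r$-uniform hypergraph with part sizes $s_1,\ldots,s_r$. Indeed, a copy of $K^{(r)}_{s_1,\ldots,s_r}$ in $H$ with parts $S_i\subseteq V_i$, $|S_i|=s_i$, would say that every transversal $(u_1,\ldots,u_r)\in S_1\times\cdots\times S_r$ is a $K_r$ in $G$; in particular every pair $u_iu_j$ with $u_i\in S_i$, $u_j\in S_j$ is in $E(G)$, so $S_1\cup\cdots\cup S_r$ spans a $K_{s_1,\ldots,s_r}$ in $G$, contradicting our hypothesis. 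By Erd\H{o}s' hypergraph Zarankiewicz theorem, any such $H$ has at most $O(n^{r-1/\prod_{i<r}s_i})$ hyperedges, which immediately gives $k_r(G)=O(n^{r-1/\prod_{i<r}s_i})$.

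The remaining and main obstacle is to strengthen this $O(\cdot)$ into the claimed $o(\cdot)$. I would do this through supersaturation: assuming $k_r(G)\ge \varepsilon n^{r-1/\prod_{i<r}s_i}$ for some fixed $\varepsilon>0$, the same reduction forces $H$ to be so dense that an averaging/convexity argument over $(s_1+\cdots+s_{r-1})$-tuples of vertices in $V_r$ produces many $(r-1)$-partite patterns with common-neighbourhood size $\ge s_r$, and a short probabilistic deletion or iterated counting then extracts an actual $K^{(r)}_{s_1,\ldots,s_r}$ in $H$ once $n$ is large, yielding a contradiction. Concretely, this amounts to a supersaturation lemma for the hypergraph Zarankiewicz problem along the lines of Erd\H{o}s' original argument with a slack factor tending to infinity; establishing such a lemma is the technical heart of the proof, but it is a natural quantitative strengthening of the counting that already underlies the $O$-bound, and once it is in place the statement $\mathrm{ex}(n,K_r,K_{s_1,\ldots,s_r})=o(n^{r-1/\prod_{i<r}s_i})$ follows.
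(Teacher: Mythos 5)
First, note that the paper does not prove this lemma at all: it is quoted verbatim from Balogh, Jiang and Luo~\cite{complete_r_partite_free}, so there is no in-paper argument to compare against; what follows is an assessment of your sketch on its own terms. The first three steps of your proposal are sound: the random $r$-colouring, the observation that a complete $r$-partite sub-hypergraph $K^{(r)}_{s_1,\ldots,s_r}$ of the rainbow-clique hypergraph $H$ forces a copy of $K_{s_1,\ldots,s_r}$ in $G$, and Erd\H{o}s' box (hypergraph Zarankiewicz) theorem together give $\mathrm{ex}(n,K_r,K_{s_1,\ldots,s_r})=O\left(n^{r-1/\prod_{i=1}^{r-1}s_i}\right)$. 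This is the standard easy bound, and in fact the colouring is not even needed: one can take $H$ to be the hypergraph of all $K_r$-copies directly.

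The genuine gap is in your final step, which is precisely where the content of the lemma lies. The ``supersaturation lemma for the hypergraph Zarankiewicz problem'' that you defer to --- namely, that $e(H)\ge\varepsilon n^{r-1/\prod_{i=1}^{r-1}s_i}$ for a fixed $\varepsilon>0$ already forces a copy of $K^{(r)}_{s_1,\ldots,s_r}$ in an arbitrary $r$-uniform hypergraph once $n$ is large --- is false, not merely difficult. Already for $r=2$ the norm graphs show that $K_{s,t}$-free graphs can have $\Theta(n^{2-1/s})$ edges, and for $r=3$, $s_1=s_2=s_3=2$, Katz, Krop and Maggioni constructed $K^{(3)}_{2,2,2}$-free $3$-uniform hypergraphs with $\Theta(n^{11/4})$ edges, matching Erd\H{o}s' exponent up to a constant factor. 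So once you pass from $G$ to the auxiliary hypergraph $H$ and forget everything except that $H$ is $K^{(r)}_{s_1,\ldots,s_r}$-free, no argument can upgrade the $O(\cdot)$ to the claimed $o(\cdot)$: the little-o improvement must exploit the additional structure that the hyperedges of $H$ are the $r$-cliques of a single graph $G$ which is itself $K_{s_1,\ldots,s_r}$-free (this is exactly what the proof of Balogh, Jiang and Luo does). Your proposal contains no mechanism that uses this graph structure after the reduction, so it proves only the weaker $O$-bound, not the stated lemma.
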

By Corollary~\ref{ex(n,kr,F)_indicates_mex(m,Kr,F)} and Lemma~\ref{complete_r_partite_free}, we obtain an upper bound for $mex\left(m, K_r, K_{s_1, s_2, \ldots, s_r}\right)$.
\begin{thm}\label{m_complete_r_partite_free}
    For every positive integer $r \geqslant 3$ and positive integers $s_1 \leqslant s_2 \leqslant \ldots \leqslant s_r$, $mex\left(m, K_r, K_{s_1, s_2, \ldots, s_r}\right)=o\left(m^{\frac{(r-1)s}{r+s-2}}\right)$, where $s=r-\frac{1}{\prod_{i=1}^{r-1} s_i}$.
\end{thm}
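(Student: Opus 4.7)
The plan is to chain Lemma~\ref{complete_r_partite_free} into Corollary~\ref{ex(n,kr,F)_indicates_mex(m,Kr,F)} with $F=K_{s_1,\ldots,s_r}$. By Lemma~\ref{complete_r_partite_free}, $\mathrm{ex}(n,K_r,K_{s_1,\ldots,s_r}) = o(n^{s})$ where $s=r-\frac{1}{\prod_{i=1}^{r-1}s_i}$. Once we check that $s$ falls in the admissible range $(1,r]$ required by Corollary~\ref{ex(n,kr,F)_indicates_mex(m,Kr,F)}, that corollary immediately yields $\mathrm{mex}(m,K_r,K_{s_1,\ldots,s_r}) = o\bigl(m^{\frac{(r-1)s}{r+s-2}}\bigr)$, which is the desired conclusion.

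The verification that $1 < s \le r$ is where one should be careful but is not deep. Since each $s_i$ is a positive integer, $\prod_{i=1}^{r-1} s_i \ge 1$, so $\frac{1}{\prod_{i=1}^{r-1} s_i} \in (0,1]$. Therefore $s = r - \frac{1}{\prod_{i=1}^{r-1} s_i} \in [r-1,\, r)$, and in particular $s \ge r-1 \ge 2 > 1$ because $r \ge 3$, while $s < r \le r$. Hence the hypothesis $1 < s \le r$ of Corollary~\ref{ex(n,kr,F)_indicates_mex(m,Kr,F)} is fulfilled.

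There is no real obstacle in this argument since the substantive work has already been carried out in Lemma~\ref{complete_r_partite_free} (for the $n$-vertex generalized Tur\'an bound) and in Theorem~\ref{Many_Kr_indicates_subgraph_with_many_Kr} (through which Corollary~\ref{ex(n,kr,F)_indicates_mex(m,Kr,F)} was derived). The only potential confusion would be if some $s_i$ were allowed to be zero, which would make $s=r$ and require a slightly different invocation of the corollary, but the hypothesis of Theorem~\ref{m_complete_r_partite_free} rules this out. Thus the proof will be a short two-line deduction: invoke Lemma~\ref{complete_r_partite_free}, then invoke Corollary~\ref{ex(n,kr,F)_indicates_mex(m,Kr,F)} with the exponent $s$ identified above.
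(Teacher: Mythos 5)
Your proposal is correct and matches the paper's own derivation: the paper obtains Theorem~\ref{m_complete_r_partite_free} exactly by feeding Lemma~\ref{complete_r_partite_free} into Corollary~\ref{ex(n,kr,F)_indicates_mex(m,Kr,F)}, and your verification that $s = r - \frac{1}{\prod_{i=1}^{r-1}s_i} \in [r-1,r) \subseteq (1,r]$ for $r\ge 3$ is the only checking needed.
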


For the case when $r=3$, we have the following.

\begin{cor}
    For positive integers $s_1 \leqslant s_2\leqslant s_3$, $\mathrm{mex}(m,K_3,K_{s_1,s_2,s_3})=o(m^{\frac{3}{2}-\frac{1}{8s_1s_2-2}})$.
    When $\frac{s_1s_2+s_2s_3+s_3s_1}{s_1+s_2+s_3}>\frac{3}{2}$, then $\mathrm{mex}(m,K_3,K_{s_1,s_2,s_3})=\Omega(m^{\frac{3}{2}-\frac{3(s_1+s_2+s_3)-6}{4(s_1s_2+s_2s_3+s_3s_1)-2(s_1+s_2+s_3)-8}})$.
\end{cor}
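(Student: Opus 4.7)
The plan is to derive the two bounds independently, using Theorem~\ref{m_complete_r_partite_free} for the upper bound and Theorem~\ref{Lowerbound_of_Kr_in_some_F_free} for the lower bound.

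For the upper bound, I would simply specialize Theorem~\ref{m_complete_r_partite_free} to $r=3$, $F=K_{s_1,s_2,s_3}$. Setting $s=3-\frac{1}{s_1 s_2}$, the theorem yields $\mathrm{mex}(m,K_3,K_{s_1,s_2,s_3})=o\!\left(m^{2s/(s+1)}\right)$. A short calculation gives $\frac{2s}{s+1}=\frac{6s_1 s_2-2}{4 s_1 s_2-1}=\frac{3}{2}-\frac{1}{8s_1 s_2-2}$, which is exactly the stated exponent.

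For the lower bound, I would apply Theorem~\ref{Lowerbound_of_Kr_in_some_F_free} with $u=2$, $r=3$, $F=K_{s_1,s_2,s_3}$, so that $\mathrm{ex}_2(m,K_3,F)=\mathrm{mex}(m,K_3,F)$. Writing $v=s_1+s_2+s_3$ and $e=s_1s_2+s_2s_3+s_3s_1$, the three hypotheses of Theorem~\ref{Lowerbound_of_Kr_in_some_F_free} must be verified: (i) $v>2$, which is immediate; (ii) $e>\frac{r-1}{2}v+\binom{r}{2}-(r-1)=v+1$, which follows from $\tfrac{e}{v}>\tfrac{3}{2}$ once one notes $v\ge 3$; (iii) the density condition
\[
\max_{F_0\subseteq F,\,e(F_0)>0}\frac{2e(F_0)}{v(F_0)}<\frac{2e-6}{v-2}.
\]
For (iii) I would invoke Lemma~\ref{Min_Phi_KPI_2} to conclude that the maximum on the left is attained by $F$ itself and equals $\tfrac{2e}{v}$; a routine manipulation shows that $\tfrac{2e}{v}<\tfrac{2e-6}{v-2}$ is equivalent to $\tfrac{e}{v}>\tfrac{3}{2}$, which is precisely the stated hypothesis.

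With all hypotheses verified, Theorem~\ref{Lowerbound_of_Kr_in_some_F_free} gives
\[
\mathrm{mex}(m,K_3,K_{s_1,s_2,s_3})\;\ge\;C\,m^{\frac{3e-3v-3}{2e-v-4}}.
\]
A direct computation shows $\frac{3e-3v-3}{2e-v-4}=\frac{3}{2}-\frac{3v-6}{4e-2v-8}$, i.e.\ the exponent in the claimed bound after substituting $e$ and $v$. The only real work is the two exponent simplifications, both elementary; there is no genuine obstacle, since Theorems~\ref{m_complete_r_partite_free} and~\ref{Lowerbound_of_Kr_in_some_F_free} together with Lemma~\ref{Min_Phi_KPI_2} do all the heavy lifting, and the density hypothesis of Theorem~\ref{Lowerbound_of_Kr_in_some_F_free} matches the hypothesis of the corollary under Lemma~\ref{Min_Phi_KPI_2}.
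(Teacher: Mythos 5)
Your proposal is correct and follows essentially the same route as the paper: the upper bound is read off directly from Theorem~\ref{m_complete_r_partite_free} with $s=3-\frac{1}{s_1s_2}$, and the lower bound comes from Theorem~\ref{Lowerbound_of_Kr_in_some_F_free} with $u=2$, $r=3$ after verifying the edge-count and density hypotheses via Lemma~\ref{Min_Phi_KPI_2}, exactly as in the paper. (Incidentally, your density check $\frac{2e}{v}<\frac{2e-6}{v-2}\iff\frac{e}{v}>\frac{3}{2}$ is stated with the correct inequality directions, whereas the paper's displayed chain has the signs reversed, evidently a typo.)
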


\begin{proof}
    The upper bound can be obtained by Theorem \ref{m_complete_r_partite_free} directly. The proof of the lower bound requires Theorem~\ref{Lowerbound_of_Kr_in_some_F_free}, so we first verify the conditions are satisfied: $e(K_{s_1,s_2,s_3})=s_1s_2+s_2s_3+s_3s_1>\frac 3 2(s_1+s_2+s_3)>v(K_{s_1,s_2,s_3})+1$;
\begin{align*}
    \max\limits_{F\subseteq K_{s_1,s_2,s_3},e_{K_{s_1,s_2,s_3}}>0}\frac{2e(F)}{v(F)}
    &\ge \frac{2(s_1s_2+s_2s_3+s_3s_1)}{s_1+s_2+s_3}  
    \\
    &>\frac{2(s_1s_2+s_2s_3+s_3s_1)-6}{s_1+s_2+s_3-2}, 
\end{align*}
where the first inequality holds because of Lemma~\ref{Min_Phi_KPI_2} and the second inequality holds as $ \frac{s_1s_2+s_2s_3+s_3s_1}{s_1+s_2+s_3}>\frac{3}{2}$. Hence, by Theorem~\ref{Lowerbound_of_Kr_in_some_F_free}, we have $$\mathrm{mex}(m,K_3,K_{s_1,s_2,s_3})=\Omega(m^{\frac{3}{2}-\frac{3(s_1+s_2+s_3)-6}{4(s_1s_2+s_2s_3+s_3s_1)-2(s_1+s_2+s_3)-8}}).$$
\end{proof}

For two given graphs $G$ and $H$, we denote by $G \vee H$ the graph with vertex set $V(G) \cup V(H)$ and edge set $E(G) \cup E(H) \cup \{xy : x \in V(G), y \in V(H)\}$. In Theorem \ref{m_complete_r_partite_free}, when $s_1=1$, we can get a better upper bound by Theorem \ref{Upperbound_of_Kr_in_some_F_free}. Since $K_{1,s_2,\ldots s_r}=K_1\vee K_{s_2,\ldots,s_r}$, there exists $c_2$ that $\mathrm{ex}(n,K_{r-1},K_{s_2\ldots,s_r})\le c_2 n^{r-1-1/\prod_{i=2}^{r-1}}$. Therefore, by Theorem \ref{Upperbound_of_Kr_in_some_F_free}, $\mathrm{mex}(n,K_{r},K_{1,s_2\ldots,s_r})=O(m^{(r-1/\prod_{i=2}^{r-1})/2})$.

\subsection{$(K_{s}\vee C_{l})$-free graph}

We denote by $C_l$ the cycle on $l$ vertices. The study of the edge count of $C_{2k}$-free graphs has been a significant topic in extremal graph theory. Bondy and Simonovits~\cite{Bondy_C2k} proved that $\text{ex}(n, C_{2k}) = O(n^{1 + 1/k})$, and this result was improved by Bukh and Jiang~\cite{Bukh_bound} to $\text{ex}(n, C_{2k}) \leq 80 \sqrt{k} \log k \cdot n^{1 + 1/k} + O(n)$. Meanwhile, the generalized Tur\'an problem $\text{ex}(n, K_r, C_l)$ has also been widely discussed. Alon and Shikhelman~\cite{Alon_Many T copies in H-free graphs} showed that $\text{ex}(n, K_3, C_{2k+1}) = O(n^{1 + 1/k})$, and this result was extended by Gerbner, Methuku and Vizer~\cite{Gerbner_kF_free} in the following lemma.
\begin{lem}\label{ex(n,Kr,Cl)}~\cite{Gerbner_kF_free}
We have 
\begin{itemize}
    \item[(a)] For any $r \geq 3,  l \geq 2$, 
$\mathrm{ex}\left(n, K_r, C_{2 l+1}\right) =O\left(n^{1+1 / l}\right)$.

\item[(b)] For any $r\ge 2, l \geq 2$, 
$\mathrm{ex}\left(n, K_r, C_{2 l}\right) =O\left(n^{1+1 / l}\right)$.
\end{itemize}
\end{lem}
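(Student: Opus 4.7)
The plan is to prove both parts by a common neighborhood-induction on $r$, based on the identity $r\,k_r(G) = \sum_{v \in V(G)} k_{r-1}(G[N_G(v)])$ together with the observation that, whenever $G$ is $C_\ell$-free, every $G[N_G(v)]$ is $P_{\ell-1}$-free: a path $u_1 \cdots u_{\ell-1}$ inside $N_G(v)$ closes a $C_\ell$ via the end-edges $vu_1$ and $vu_{\ell-1}$. I would then establish the auxiliary lemma $\mathrm{ex}(n, K_s, P_k) = O(n)$ for every fixed $s, k \geq 2$ by the same neighborhood trick: the base case $s = 2$ is the Erd\H{o}s--Gallai bound $e(H)\le \frac{k-2}{2}v(H)$, and the inductive step uses that $P_k$-free graphs also have $P_{k-1}$-free neighborhoods. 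This gives $k_{r-1}(G[N_G(v)]) = O(d_G(v))$, and summing over $v$ yields the clean reduction
$$
k_r(G) \;=\; O\bigl(e(G)\bigr).
$$

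For part~(b), this reduction already closes the argument: the base case $r = 2$ is the Bondy--Simonovits bound $\mathrm{ex}(n, C_{2l}) = O(n^{1+1/l})$, and combining it with the reduction above yields $k_r(G) = O(n^{1+1/l})$ for every $r \geq 2$.

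For part~(a), the same chain still delivers $k_r(G) = O(e(G))$ whenever $G$ is $C_{2l+1}$-free and $r \geq 3$, but this is useless on its own, since a $C_{2l+1}$-free graph can have $\Theta(n^2)$ edges (any bipartite graph qualifies). To recover the target bound I would restrict to the triangle-supported subgraph $G^\triangle \subseteq G$ consisting of edges that lie in at least one triangle; every $K_r$ with $r \geq 3$ sits inside $G^\triangle$, so it suffices to show that $e(G^\triangle) = O(n^{1+1/l})$ in any $C_{2l+1}$-free graph. This last step is the main obstacle, and it is precisely where the odd-cycle case genuinely departs from the even-cycle one: one must adapt the Bondy--Simonovits BFS argument so that a triangle edge incident to the BFS path supplies an extra parity flip, promoting the forced even cycle to the forbidden length $2l+1$. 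Granted such an odd-cycle edge bound on $G^\triangle$, the rest of part~(a) follows from the same neighborhood reduction used for part~(b).
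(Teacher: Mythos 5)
The paper never proves this lemma---it is imported verbatim from Gerbner--Methuku--Vizer---so your argument has to stand entirely on its own. The reduction machinery you build is correct: $r\,k_r(G)=\sum_{v}k_{r-1}(G[N_G(v)])$, a $C_\ell$-free graph has $P_{\ell-1}$-free neighborhoods (a path on $\ell-1$ vertices inside $N_G(v)$ closes through $v$ into a $C_\ell$), and your auxiliary claim $\mathrm{ex}(n,K_s,P_k)=O(n)$ does follow by the induction you describe, since $P_k$-free graphs have $P_{k-1}$-free neighborhoods and the base case is Erd\H{o}s--Gallai. Consequently part (b) is complete: $k_r(G)=O(e(G))=O(\mathrm{ex}(n,C_{2l}))=O(n^{1+1/l})$ by Bondy--Simonovits, with $r=2$ being Bondy--Simonovits itself.

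Part (a), however, is not proved. Your chain only yields $k_r(G)=O(e(G^\triangle))$, and the whole content of the odd-cycle case is precisely the missing estimate $e(G^\triangle)=O(n^{1+1/l})$ for $C_{2l+1}$-free $G$, which is (up to a factor $3$) the case $r=3$ of the lemma, i.e. $\mathrm{ex}(n,K_3,C_{2l+1})=O(n^{1+1/l})$. You explicitly assume it (``granted such an odd-cycle edge bound'') and only gesture at adapting the Bondy--Simonovits breadth-first-search argument via ``an extra parity flip.'' That step is not routine: even using the strong form of Bondy--Simonovits that produces a cycle of length exactly $2l$ in $G^\triangle$, the triangle certifying one of its edges lives in $G$, its apex may lie on the cycle or be shared among several cycle edges, and none of this automatically produces a cycle of length exactly $2l+1$; controlling these interactions is where the real work lies, and the known proofs of the $r=3$ bound (Alon--Shikhelman) proceed by a genuinely different reduction of triangle counts in $C_{2k+1}$-free graphs to even-cycle Tur\'an numbers. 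As it stands, your part (a) reduces the lemma to its hardest special case rather than proving it, so there is a genuine gap; part (b) and the reduction to the triangle-supported subgraph are fine and could be kept as the outer layer of a complete proof once the $K_3$ case is supplied (for instance by citing Alon--Shikhelman for it).
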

Now we obtain the following estimates for $\text{mex}(m, K_r, K_{s}\vee C_{l})$.

\begin{thm}\label{mex(m,Kr,KsvCl)}
    Let $r\geq 3$, $l\geq 4$ and $s\geq 1$ be positive integers. Then we have
    \begin{itemize}
        \item[(i)] If $l=2t$ and $r\geq s+2$, or $l=2t+1$ and $r\geq s+3$ for some $t\geq 2$, then $$\mathrm{mex}(m,K_r,K_{s}\vee C_{l})=O(m^{\frac{s+1}{2}+\frac{1}{2t}}).$$
    \item[(ii)] Otherwise, $\text{mex}(m,K_r,K_{s}\vee C_{l})=\Theta(m^{\frac{r}{2}})$.
    \end{itemize}
\end{thm}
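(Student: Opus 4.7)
The plan is to split on whether $\chi(K_s \vee C_l)$ exceeds $r$. A direct computation shows $\chi(K_s \vee C_{2t}) = s+2$ and $\chi(K_s \vee C_{2t+1}) = s+3$, so the case distinction in the statement matches exactly the sign of $r - \chi(K_s \vee C_l)$: case (ii) is precisely the regime $\chi(K_s \vee C_l) > r$, and case (i) is $\chi(K_s \vee C_l) \leq r$.

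Case (ii) is quick: the lower bound $\Omega(m^{r/2})$ is immediate from Corollary~\ref{When_number_Kt_reach_m^{t/2}}, while the matching $O(m^{r/2})$ upper bound holds for every graph with $m$ edges by Lemma~\ref{Max_Ku_in_G_by_Kr}.

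For case (i), I would apply Theorem~\ref{Upperbound_of_Kr_in_some_F_free} with $F = K_s \vee C_l$. Since $\chi(F) \geq 3$, Erd\H{o}s--Stone gives $\mathrm{ex}(n, F) = \Theta(n^2)$, so one takes $\alpha = 2$. For $\beta$, I would pick $v_0$ in the $K_s$ side, yielding $F - v_0 = K_{s-1} \vee C_l$ (with $K_0$ read as the empty graph when $s=1$), and prove as an auxiliary estimate
\[
\mathrm{ex}(n, K_{r-1}, K_{s-1} \vee C_l) = O(n^{s + 1/t}).
\]
This follows from a one-step double-count: for every $K_{s-1}$-copy $S'$ in a $K_{s-1} \vee C_l$-free graph $G$, the induced graph $G[N(S')]$ must be $C_l$-free, so Lemma~\ref{ex(n,Kr,Cl)} bounds $k_{r-s}(G[N(S')])$ by $O(|N(S')|^{1+1/t}) = O(n^{1+1/t})$; summing over the $O(n^{s-1})$ copies of $S'$ via the identity $k_{r-1}(G) = \binom{r-1}{s-1}^{-1} \sum_{S'} k_{r-s}(G[N(S')])$ yields the bound. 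The hypotheses $r \geq s+2$ (even $l$) and $r \geq s+3$ (odd $l$) are exactly what guarantee $r-s \geq 2$ and $r-s \geq 3$ so that parts (b) and (a) of Lemma~\ref{ex(n,Kr,Cl)} apply.

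Plugging $\alpha = 2$ and $\beta = s + 1/t$ (which exceeds $1$ since $s \geq 1$ and $t \geq 2$) into Theorem~\ref{Upperbound_of_Kr_in_some_F_free} yields
\[
\mathrm{mex}(m, K_r, K_s \vee C_l) \leq C\, m^{f(2,\, s + 1/t)} = C\, m^{(s+1)/2 + 1/(2t)},
\]
as claimed. The main technical step is the double-counting sublemma; everything else is parameter alignment with Theorem~\ref{Upperbound_of_Kr_in_some_F_free} and the two black-box tools above.
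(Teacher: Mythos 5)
Your proposal is correct and follows essentially the same route as the paper: case (ii) via the chromatic-number criterion (Corollary~\ref{When_number_Kt_reach_m^{t/2}} for the lower bound, Lemma~\ref{Max_Ku_in_G_by_Kr} for the trivial upper bound), and case (i) by bounding $\mathrm{ex}(n,K_{r-1},K_{s-1}\vee C_l)=O(n^{s+1/t})$ through the common neighborhood of each $K_{s-1}$-copy being $C_l$-free (Lemma~\ref{ex(n,Kr,Cl)}), then feeding $\alpha=2$, $\beta=s+1/t$ into Theorem~\ref{Upperbound_of_Kr_in_some_F_free}. Your double-count with the factor $\binom{r-1}{s-1}^{-1}$ is just a slightly sharper bookkeeping of the paper's bound $\binom{n}{s-1}\mathrm{ex}(n,K_{r-s},C_l)$, and your explicit treatment of the $s=1$ case is a welcome clarification.
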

\begin{proof}
    If $l=2t$ and $r\leq s+1$, or $l=2t+1$ and $r\leq s+2$ for some $t\geq 2$, then $$\chi(K_{s}\vee C_{l})=\chi(K_s)+\chi(C_{l})\geq r+1,$$ which implies $(m,K_r,K_{s}\vee C_{l})=\Theta(m^{\frac{r}{2}})$ by Corollary~\ref{When_number_Kt_reach_m^{t/2}}.

    If $l=2t$ and $r-s\geq 2$ or $l=2t+1$ and $r-s\geq 3$ and let $G$ be a $K_{s-1}\vee C_{l}$-free graph of order $n$. Note that every $K_{r-1}$ in $G$ can be viewed as $K_{s-1} \vee K_{r-s}$. Since $G$ is $(K_{s-1} \vee C_l)$-free, within each copy of $K_{s-1}$, the common neighborhood of its vertices can contain at most $\text{ex}(n, K_{r-s}, C_l)$ copies of $K_{r-s}$. Thus by Lemma~\ref{ex(n,Kr,Cl)}, $$\text{ex}(n,K_{r-1},K_{s-1}\vee C_l)\le \binom{n}{s-1}\text{ex}(n,K_{r-s},C_l)=O(n^{{s}+\frac{1}{t}}).$$ Therefore, combining the fact that $\text{ex}(n,K_s\vee C_l)=\Theta(n^2)$ and Theorem \ref{Upperbound_of_Kr_in_some_F_free}, we have $\text{mex}(m,K_r,K_{s}\vee C_{l})=O(m^{\frac{s+1}{2}+\frac{1}{2t}}).$
\end{proof}

\section{Concluding remarks}
 For a given graph $H$ on $h$ vertices, let $V(H) = [h]$ and $s$ be a positive integer. We say that a graph $G$ is an $s$-blow-up of $H$ if $V(G)$ can be partitioned into $V_1 \cup \dots \cup V_h$, where each $V_i$ is an independent set, and there is an edge between $v_i \in V_i$ and $v_j \in V_j$ if and only if $i$ and $j$ are adjacent in $H$. Lemma~\ref{Alon_Kr_O(n^r)_iff_chi(F)>r} is a direct corollary of the following proposition obtained by Alon and Shikhelman~\cite{Alon_Many T copies in H-free graphs}, which inspires us to find the necessary and sufficient conditions for $\text{mex}(m, K_r, F) = \Omega(m^{\frac{r}{2}})$, i.e. Corollary~\ref{When_number_Kt_reach_m^{t/2}}.
\begin{thm}~\cite{Alon_Many T copies in H-free graphs}\label{ex(n,T,H)_Blowup}
    Let $T$ be a fixed graph on $t$ vertices. Then $\mathrm{ex}(n,T,H)=\Omega(n^t)$ iff $H$ is not a subgraph of a blow-up of $T$. Otherwise, $\mathrm{ex}(n,T,H)\leq n^{t-\epsilon\left(T,H\right)}$ for some $\epsilon(T,H)>0$.
\end{thm}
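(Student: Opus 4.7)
The plan is to establish the two directions of the dichotomy separately, obtaining the upper bound via a hypergraph Kővári–Sós–Turán argument applied to the ``labeled copy'' hypergraph of $T$ inside $G$.

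For the easier direction, suppose $H$ is not a subgraph of any blow-up of $T$. Take $G$ to be the balanced $\lfloor n/t\rfloor$-blow-up of $T$; by hypothesis $G$ is itself $H$-free. Every choice of one vertex from each of the $t$ classes realizes a (labeled) copy of $T$, so $G$ contains at least $\lfloor n/t\rfloor^t=\Omega(n^t)$ copies of $T$. Combined with the trivial upper bound $\mathrm{ex}(n,T,H)\le\binom{n}{t}\,t!=O(n^t)$, this already yields $\mathrm{ex}(n,T,H)=\Theta(n^t)$, which is what is asserted in this regime.

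For the harder direction, suppose $H$ is a subgraph of the $s$-blow-up $T_{(s)}$ for some positive integer $s=s(T,H)$. Every $H$-free graph is then also $T_{(s)}$-free, so it suffices to prove
\begin{equation*}
\mathrm{ex}(n,T,T_{(s)})\le n^{t-\epsilon}
\end{equation*}
for some $\epsilon=\epsilon(T,s)>0$. To this end, fix an ordering $V(T)=\{w_1,\ldots,w_t\}$ and, given an $n$-vertex graph $G$, randomly $t$-color $V(G)$ uniformly and independently. A constant fraction $(t!/t^t)$ of the labeled copies of $T$ in $G$ is \emph{rainbow} in the sense that the vertex playing the role of $w_i$ receives color $i$. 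Place these rainbow labeled copies into a $t$-partite $t$-uniform hypergraph $\mathcal{H}$ whose $i$-th part is the $i$-th color class and whose hyperedges are precisely such ordered copies. Now apply the Erdős hypergraph Turán theorem: any $t$-partite $t$-uniform hypergraph with parts of size at most $n$ and more than $C(s,t)\,n^{t-1/s^{t-1}}$ hyperedges contains a complete $t$-partite sub-hypergraph $K^{(t)}_{s,\ldots,s}$. If $G$ contains more than $n^{t-1/s^{t-1}}$ copies of $T$, this forces $K^{(t)}_{s,\ldots,s}$ inside $\mathcal{H}$; the disjointness of color classes guarantees that the resulting $s$-tuples $V_1',\ldots,V_t'$ are pairwise disjoint, and by construction every $(v_1,\ldots,v_t)\in V_1'\times\cdots\times V_t'$ yields a copy of $T$ with $v_i$ in the role of $w_i$. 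This is exactly a copy of $T_{(s)}$ in $G$, contradicting $T_{(s)}$-freeness. Therefore $\epsilon=1/s^{t-1}$ works.

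The main obstacle I anticipate is the bookkeeping in the hypergraph Kővári–Sós–Turán step: one must verify that a rainbow labeled copy corresponds to an injective embedding (handled by disjointness of color classes), that overcounting by $|\mathrm{Aut}(T)|$ only affects constants, and — most importantly — that the $s^t$ ordered tuples produced by $K^{(t)}_{s,\ldots,s}$ assemble into an honest copy of $T_{(s)}$ rather than merely a homomorphic image. A careful choice of the partite structure via the random coloring handles this uniformly. Once this is in place, the exponent saving $\epsilon=1/s^{t-1}$ falls out of the Erdős hypergraph Turán bound and depends only on $T$ and $H$ (through $s$), as required.
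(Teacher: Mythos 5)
This theorem is quoted from Alon and Shikhelman and the paper gives no proof of its own, so there is nothing internal to compare against; your argument is essentially the original one from~\cite{Alon_Many T copies in H-free graphs}. Both directions are correct: the balanced blow-up of $T$ gives the $\Omega(n^t)$ lower bound, and the reduction to $\mathrm{ex}(n,T,T_{(s)})$ followed by the random rainbow colouring and Erd\H{o}s's theorem on complete $t$-partite $t$-uniform hypergraphs gives the $n^{t-1/s^{t-1}}$ upper bound, with the colour classes correctly forcing the role-consistency needed to assemble the $K^{(t)}_{s,\ldots,s}$ into an honest copy of $T_{(s)}$.
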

However, Theorem \ref{ex(n,T,H)_Blowup} does not have an analogue under the constraint of a fixed number of edges. 
One can see this by showing that $\text{mex}(m,K_{1,r},K_{s,t})=\Theta(m^r)$ for $t\geq s\geq 2$. Note that when $s \geq 2$ and $s r \geq t$, the graph $K_{s,t}$ is a subgraph of an $s$-blow-up of $K_{1,r}$.
\begin{prop}
    Let $t\geq s\geq 2 $ and $ r\geq 1$ be fixed integers, then $$\mathrm{mex}(m,K_{1,r},K_{s,t})=\binom{m}{r}.$$
\end{prop}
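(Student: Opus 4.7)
The plan is to prove the statement by matching an upper bound argument with an explicit extremal construction. Since the claim is an equality, \textbf{no freeness of the host graph is actually needed for the upper bound}: I will show that for \emph{every} graph $G$ with $m$ edges, $\mathcal{N}(K_{1,r},G)\le\binom{m}{r}$, and then exhibit a single $K_{s,t}$-free graph on $m$ edges realizing this count.

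For the upper bound, I would set up an injection
\[
\phi:\{\text{copies of }K_{1,r}\text{ in }G\}\longrightarrow \binom{E(G)}{r},\qquad \phi(T)=E(T).
\]
A copy $T$ of $K_{1,r}$ is an $r$-edge star, determined by its center $v$ together with a choice of $r$ neighbours of $v$; equivalently, by an $r$-element set of edges of $G$ all incident to a common vertex. The only point to verify is that $\phi$ is injective, i.e.\ that a set of $r$ distinct edges can serve as the edge set of at most one $K_{1,r}$-copy. For $r\ge 2$ this holds because any two distinct edges of $G$ share at most one vertex, so the center is uniquely recovered from $E(T)$; for $r=1$ a $K_{1,1}$-copy \emph{is} an edge and the claim is tautological. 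Thus $\mathcal{N}(K_{1,r},G)\le|\phi(\cdot)|\le\binom{m}{r}$, with no hypothesis on $G$.

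For the lower bound I would take $G=K_{1,m}$, the star on $m$ edges. Every $r$-subset of the $m$ edges of $K_{1,m}$ shares the unique center, and so forms a $K_{1,r}$-copy; this gives exactly $\binom{m}{r}$ copies. Moreover $K_{1,m}$ is bipartite with one part of size $1$, hence contains no $K_{2,2}$, and in particular no $K_{s,t}$ for $t\ge s\ge 2$. This provides the matching lower bound $\mathrm{mex}(m,K_{1,r},K_{s,t})\ge\binom{m}{r}$.

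There is really no main obstacle here: the whole argument is a one-line injection plus a one-line construction, and the proof works uniformly in $r\ge 1$ and $t\ge s\ge 2$. The only (very mild) subtlety is the uniqueness of the center in a set of $r\ge 2$ edges that form a star, and this follows instantly from the fact that two distinct edges meet in at most one vertex.
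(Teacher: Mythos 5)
Your proof is correct and follows essentially the same route as the paper: the upper bound via observing that any $r$ edges span at most one copy of $K_{1,r}$ (so $\mathcal{N}(K_{1,r},G)\le\binom{m}{r}$ for every graph, no freeness needed), and the matching lower bound from the star $K_{1,m}$, which is $K_{s,t}$-free for $t\ge s\ge 2$. The only difference is presentational — you make the edge-set injection and the uniqueness of the center explicit, which the paper leaves implicit.
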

\begin{proof}
Let $G$ be a $K_{s,t}$-free graph of size $m$. Then we choose any $r$ edges in $G$. These edges can induce at most $1$ copy of $K_{1,r}$, which implies $\mathcal{N}(K_{1,r},G)\leq \binom{m}{r}$. On the other hand, when $G$ is a star $K_{1,m}$ of size $m$, then $\mathcal{N}(K_{1,r},K_{1,m})=\binom{m}{r}$.
\end{proof}
We further investigate the relationship between $\text{mex}(m, H, F)$ and $\text{ex}(n, H, F)$. By Theorem~\ref{UpperBound_K_r_in_K_st-free_Fixed_Edge}, when $r = 3$, $s\geq 3$ and $t \geq (s-1)! + 1$, we obtain $\text{mex}(m, K_r, K_{s,t}) = \Theta(m^{\frac{3s - 3}{2s - 1}})$. Meanwhile, since $\text{ex}(n, K_{s,t}) = \Theta(n^{2 - \frac{1}{s}})$, it follows that $\text{ex}(n, K_r, K_{s,t}) = O(n^{3 - \frac{3}{s}})$. Furthermore, by the existence of $H(q, s)$, we deduce that $\text{ex}(n, K_r, K_{s,t}) = \Theta(n^{3 - \frac{3}{s}})$. To this end, we introduce the following definition:
\begin{definition}
    Let $F$ and $H$ be two fixed simple graphs. Let $f_F(n)=\text{ex}(n,F)$, $f_{H,F}(n)=\text{ex}(n,H,F)$ and $g_{H,F}(m)=\text{mex}(m,H,F)$. We say $H$ is $F$-edge-Tur\'an-good if for any $f_H(n)\le m<f_H(n+1)$, $g_{H,F}(m)=\Theta(f_{H,F}(n))$.
\end{definition}
According to our discussions above, $K_3$ is $K_{s,t}$-edge-Tur\'an-good for $t\geq (s-1)!+1\geq 3$. Evidently, $K_u$ is $K_r$-edge-Tur\'an-good if $u<r$. Here we give an example that $H$ is not $F$-edge-Tur\'an-good. Let $H=K_3$ and $F=C_{2k+1}$. Notice that $f_{C_{2k+1}}(n)=\Omega(n^2)$, $f_{K_3,C_{2k+1}}(n)=O(n^{1+\frac{1}{k}})$ by Alon and Shikhelman~\cite{Alon_Many T copies in H-free graphs}, and $g_{K_3,C_{2k+1}}(m)=\Theta(m)$, 
$$g_{K_3,C_{2k+1}}(f_{C_{2k+1}}(n))=\Omega(n^2)\gg f_{K_3,C_{2k+1}}(n)=O(n^{1+\frac{1}{k}}).$$ Therefore, we propose the following natural question.
\begin{question}
    Which pair of graphs $(H,F)$ satisfies the condition that $H$ is $F$-edge-Tur\'an-good?
\end{question}

\section*{Acknowledgement}
Yan Wang is supported by the National Key R\&D Program of China under Grant No. 2022YFA1006400 and Shanghai Municipal Education Commission (No. 2024AIYB003).
Xiao-Dong Zhang is supported by the National Natural Science Foundation of China (No. 12371354) and the Science and Technology Commission of Shanghai Municipality (No. 22JC1403600),  the Montenegrin-Chinese Science and Technology.

\end{document}